\documentclass[12pt,reqno]{amsart}
\usepackage{fullpage}
\usepackage[top=1.5in, bottom=2in, left=2.5in, right=2.5in]{geometry}
\usepackage{graphicx,amsmath,amssymb,amsfonts,amsthm,enumitem,bm,xcolor}
\usepackage{fullpage}
\usepackage{cleveref}

\usepackage{amsmath}

\usepackage[normalem]{ulem}

\usepackage{url}

\usepackage{color}

\renewcommand{\a}{\alpha}
\renewcommand{\b}{\beta}

\newcommand{\G}{\Gamma}
\renewcommand{\l}{\lambda}

\renewcommand{\o}{{\omega}}

\renewcommand{\r}{{\rho}}
\renewcommand{\t}{\tau}

\newcommand{\z}{\zeta}

\renewcommand{\(}{\left\(}
\renewcommand{\)}{\right\)}
\renewcommand{\i}{\infty}

\numberwithin{equation}{section}
\theoremstyle{plain}
\newtheorem{theorem}{Theorem}[section]

\newtheorem{conjecture}[theorem]{Conjecture}
\newtheorem{corollary}[theorem]{Corollary}
\newtheorem{proposition}[theorem]{Proposition}

\theoremstyle{definition}
\newtheorem*{definition}{Definition}
\newtheorem*{remark}{Remark}

\numberwithin{equation}{section}

\renewcommand{\pmod}[1]{\ \left( \mathrm{mod} \, #1 \right)}

\setlist[enumerate]{leftmargin=*,label=\rm{(\arabic*)}}

\makeatletter
\@namedef{subjclassname@2020}{%
	\textup{2020} Mathematics Subject Classification}
\makeatother

\allowdisplaybreaks

\title[Hecke sum]{Proof of a conjecture of Andrews and Bachraoui on a Hecke sum}
\author{Koustav Banerjee}
\author{Kathrin Bringmann}
\address{University of Cologne, Department of Mathematics and Computer Science, Weyertal 86-90, 50931 Cologne, Germany}
\email{kbanerj1@uni-koeln.de}
\email{kbringma@uni-koeln.de}

\makeatletter
\@namedef{subjclassname@2020}{%
	\textup{2020} Mathematics Subject Classification}
\makeatother

\subjclass[2020]{11F11, 11F20, 11F37, 11P81}
\keywords{Hecke sum, indefinite theta functions, mock theta functions, modular forms, $q$-series, Sturm bounds, theta functions}

\begin{document}

\begin{abstract}

In this paper, we prove a conjecture of Andrews and Bachraoui relating a generating function arising from two-color partitions (with odd smallest part and restrictions on the even parts) to a Hecke-type double sum. Our proof is based on Zwegers' theory of indefinite theta functions together with modular transformation properties of mock theta functions.

\end{abstract}

\maketitle

\section{Introduction and statement of results}

 Andrews and Bachraoui \cite{AB} studied two-color partitions $C(k,n)$ ($k,n\in \mathbb{N}$). To be more precise, $C(k,n)$ counts the number of two-color partitions of $n\in \mathbb{N}$, with the following conditions:
 \begin{enumerate}
 	\item the smallest part is odd and occurs at least once in blue,
 	\item every even blue part is at least $2k-1$ greater than the smallest part,
 	\item the even parts of the same color are distinct.
 \end{enumerate}
 Andrews--Bachraoui \cite[(1.1)]{AB} showed that
\[
C_k(q):=\sum_{n\ge 0}C\left(k,n\right)q^n=\sum_{n\ge 0}\frac{\left(-q^{2n+2k},-q^{2n+2};q^2\right)_{\infty}q^{2n+1}}{\left(q^{2n+1};q^2\right)^2_{\infty}}.
\]  
Here, for $n\in \mathbb{N}_0\cup \{\infty\}$, $(a)_n=(a;q)_n:=\prod_{j=0}^{n-1}(1-aq^j)$ denotes the usual {\it $q$-Pochhammer symbol}. Andrews and Bachraoui \cite[Section 10]{AB} showed that as $k\to \infty$, the generating function $C_k(q)$ converges to a function
\[
C(q)=\lim_{k\to \infty}C_k(q)=\sum_{n\ge 0}\frac{\left(-q^{2n+2};q^2\right)_{\infty}q^{2n+1}}{\left(q^{2n+1};q^2\right)^2_{\infty}}.
\]
They then represented $C(q)$ as  (using different notation)
\begin{equation*}
C(q)=qD(q)S(q),
\end{equation*}
where
\begin{align*}
D(q)&:=\!\frac{\left(-q^2;q^2\right)_{\infty}}{\left(q;q^2\right)^2_{\infty}},\quad 
S(q):=\!\sum_{n\ge 0}\frac{\left(q;q^2\right)^2_n q^{2n}}{\left(-q^2;q^2\right)_n}.
\end{align*}
The function $S$ is the main object in this paper.

To isolate the even coefficients of $S$ we use a sieving operator. To be more precise, for $N\in \mathbb{N}$ and $r\in \mathbb{Z}$, the {\it sieving operator} $S_{N,r}$ is defined on a $q$-series $f(q)=\sum_{n\ge 0} c(n)q^n$ by
\[
f(q)\!\bigm|\!S_{N,r}:=\sum_{n\equiv r\pmod N}c(n) q^n,
\]
Andrews and Bachraoui \cite[Conjecture 4]{AB} proposed the following conjecture, relating a sieved generating function to a Hecke-type double sum.
\begin{conjecture}\label{ABConj3}
	For $N\in \mathbb{N}_0$, we have 
	\begin{align*}
	\left(\left(q^4;q^4\right)_{\infty}S(q)\right)\bigm| S_{2,0}=\sum_{n\ge 0}(-1)^nq^{6n^2+4n}\left(1+q^{4n+2}\right)\sum_{|j|\le n}(-1)^{j}q^{-2j^2}.
	\end{align*}
\end{conjecture}

In this paper, we prove Conjecture \ref{ABConj3}. The proof proceeds by ``completing" both sides of Conjecture \ref{ABConj3} to real-analytic modular forms, using Zwegers' theory of indefinite theta functions \cite{Zw}. We then show that these extra terms cancel and finally use Sturm's theorem. 
\begin{theorem}\label{BBthm}
The identity in Conjecture \ref{ABConj3} holds.
\end{theorem}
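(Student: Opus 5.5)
The approach is to complete both sides to real-analytic modular forms, show that the non-holomorphic parts cancel, and finish with a Sturm bound.

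\textbf{Step 1: rewrite $S$ as a mock theta function.} $S(q)=\sum_{n\ge0}(q;q^2)_n^2q^{2n}/(-q^2;q^2)_n$ is a basic hypergeometric series of Fine/Ramanujan type. I would transform it, via Watson--Whipple or a Bailey pair with base $q^2$ (e.g.\ the pair underlying the second-order mock theta functions), into a Hecke-type double sum. Dividing by a theta quotient should then express $(q^4;q^4)_\infty S(q)$ as
\[
(q^4;q^4)_\infty S(q)=\Theta_1(q)+\Theta_2(q)\,\mathcal M(q).
\]
Here $\Theta_1,\Theta_2$ are theta quotients and $\mathcal M$ is a known mock theta function. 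I expect $\mathcal M$ to be of second or sixth order, or an Appell--Lerch sum $\mu(u,v;\tau)$ in Zwegers' notation.

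\textbf{Step 2: apply the sieve.} Applying $S_{2,0}$ means averaging $q\mapsto\pm q$. The right-hand side above then becomes a combination of Appell--Lerch sums, evaluated at $\tau$ and at $\tau+\tfrac12$, multiplied by theta functions.

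\textbf{Step 3: express the Hecke sum via Zwegers' $\mu$.} Write the Hecke-type sum on the right of Conjecture~\ref{ABConj3} as an indefinite theta series attached to the quadratic form $Q(n,j)=6n^2-2j^2$, which is indefinite of discriminant type $(1,1)$. Its summation cone is $|j|\le n$, together with the companion cone coming from the $q^{4n+2}$ term. Using Zwegers' theory (sign functions completed by $E(x)$ error functions), I would either complete this series directly or, following Hickerson--Mortenson, expand it into Appell--Lerch sums $m(x,q,z)$ plus theta quotients.

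\textbf{Step 4: compare completions.} Both sides are now completed to vector-valued nonholomorphic Jacobi/modular forms of weight $1/2$ on some $\Gamma_0(N)$ after $q\mapsto q^{c}$. The non-holomorphic correction terms are period integrals of unary weight-$3/2$ theta functions, $R$-functions in Zwegers' notation. I would match them using the elliptic shift and symmetry identities for $R$ and $\mu$. Once they cancel, the difference $F$ of the two sides is a holomorphic modular form, possibly multiplied by $\eta$-quotients to clear poles and reach integral weight.

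\textbf{Step 5: Sturm bound.} Bound the orders of $F$ at all cusps, using the known transformation laws of $\mu$ and of the theta functions. Then check, by computer, that the $q$-expansion of $F$ vanishes up to the Sturm bound $\frac{k}{12}[\mathrm{SL}_2(\mathbb Z):\Gamma_0(N)]$.

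\textbf{Main obstacle.} Step 4 is the hard part: verifying that the shadows coincide exactly, including the characteristics and the half-integral shifts produced by the sieve at $\tau+\tfrac12$. I would attack it first at the level of the weight-$3/2$ theta shadows (their $q$-expansions are easy to compare). If those agree, the holomorphic remainder is automatically modular, because the completions of both sides transform identically. A secondary difficulty is controlling the cusp behaviour in Step 5, which may force multiplication by a suitable $\eta$-quotient. Its effect on the Sturm bound must then be tracked.
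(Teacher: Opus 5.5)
\textbf{Genuine gap: Step 1.} Your overall architecture is the paper's: Zwegers completions on both sides, cancellation of the non-holomorphic parts, then Sturm. Your Step~3 is also what the paper does: after $q\mapsto q^{1/2}$, the second cone is folded in by $n\mapsto -n-1$, $j\mapsto -j$, and the completion of $q^{1/3}H$ is $\frac{e^{-\pi i/3}}{2}\vartheta_{\bm a,\bm b}$ for $Q(\bm n)=3n_1^2-n_2^2$.

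Step~1, however, is where the substance lies, and you leave it as a hope (``via Watson--Whipple or a Bailey pair \ldots\ should then express''). The paper's input is the identity of Theorem~\ref{BBBthm}, proved separately in \cite{BBB}:
$S(q)=2B(-q)-\frac{(q;q^2)_\infty^2}{(-q^2;q^2)_\infty}\omega(-q)$.
So $S$ is not of your predicted shape $\Theta_1+\Theta_2\mathcal M$ with a single mock theta function. It contains a second-order piece and a third-order piece with unrelated shadows.

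The second indispensable fact is that the sieve destroys the mock behaviour of the second-order piece. By \cite{BOR}, $[B(q)|S_{2,0}]_{q\mapsto q^{1/2}}=(q^2;q^2)_\infty^5/(q;q)_\infty^4$ is an eta quotient. Only then does the rescaled left-hand side become $2\eta^6(2\tau)/\eta^4(\tau)-\frac14\bigl(\Theta(\frac{\tau}{2})F_2(\tau)+\Theta(\frac{\tau+1}{2})F_3(\tau)\bigr)$. The surviving mock piece is Ramanujan's third-order $\omega$, not a second- or sixth-order function. Its non-holomorphic part involves only the characteristics $n+\frac13$, which is exactly what the indefinite theta function produces, since $3n^2+2n=3(n+\frac13)^2-\frac13$.

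Without these two facts your Step~4 has nothing concrete to compare, and a surviving second-order shadow would have no counterpart on the Hecke side. Once they are in hand, the comparison is a routine parity rearrangement of incomplete-gamma sums. So the difficulty sits in Step~1, not in Step~4 as you expected.

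\textbf{Secondary issues.} Some of your bookkeeping is off. The completed objects have weight $1$, not $\frac12$. A binary indefinite theta function has weight $\ell/2=1$, $\Theta\cdot\widehat F_j$ has weight $\frac12+\frac12$, and $\eta^6(2\tau)/\eta^4(\tau)$ has weight $1$. This matters for the Sturm bound.

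Agreement of the shadows also does not by itself show that the two completions ``transform identically''. At best it forces equal multipliers once both are known to be modular on a common group, and establishing those transformation laws is the work of Section~\ref{sec4}. There the paper checks that $\widehat A$ and $\widehat H$ both pick up $\zeta_3$ under $\tau\mapsto\tau+1$ and $\zeta_{12}(2\tau+1)$ under $\bigl(\begin{smallmatrix}1&0\\2&1\end{smallmatrix}\bigr)$. For $\widehat H$ this uses Zwegers' inversion formula together with the symmetry $\mathrm{diag}(-1,1)\in O_A^+(\mathbb Z)$ to collapse the twelve-term sum. It follows that $\widehat M^{12}\in M_{12}(\Gamma_0(2))$, and only four coefficients need checking.
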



The paper is organized as follows. In Section \ref{sec1} we recall the necessary background on modular forms, mock theta functions, and indefinite theta functions. Section \ref{sec2} relates the right-hand side of Section \ref{ABConj3} to an indefinite theta function. In Section \ref{sec3}, we rewrite the left-hand side of Conjecture \ref{ABConj3}. Section \ref{sec4} establishes the required modular transformation properties. In Section \ref{sec5}, we complete the proof of Theorem \ref{BBthm}, using Sturm's theorem. Finally, in Section \ref{sec6}, we give a few questions for future research.

\section*{Acknowledgements}
The authors have received funding from the European Research Council (ERC) under the European Union's Horizon 2020 research and innovation programme (grant agreement No. 101001179).

\section{Preliminaries}\label{sec1}

\subsection{Modular forms} We briefly introduce modular forms, for more background, we refer the reader to \cite{Ono}. For $N\in \mathbb{N}$, the congruence subgroup $\G_0(N)$ is defined by
 \[
 \G_0\left(N\right):=\left\{\begin{pmatrix}
 a&b\\
 c&d
 \end{pmatrix}\in \textnormal{SL}_2(\mathbb{Z}): c\equiv 0\pmod{N}\right\}.
 \]
 \begin{definition}
 	{\it A holomorphic function $f:\mathbb{H}\rightarrow \mathbb{C}$ is a {\it modular form} of weight $k\in \mathbb{Z}$ for $\G_0(N)$ if it satisfies the {\it modular transformation property} 
 	\begin{equation}\label{transform}
 	f\left(\frac{a\t+b}{c\t+d}\right)=\left(c\t+d\right)^k f(\t)\ \ \text{for all}\ \begin{pmatrix}
 	a&b\\
 	c&d
 	\end{pmatrix}\in \G_0(N).
 	\end{equation}
 Moreover, it is required to have at most polynomial growth at the ``cusps" of $\G_0(N)$. We denote the complex vector space of modular forms of weight $k$ over $\G_0(N)$ by $M_k(\G_0(N))$.}
 \end{definition}

\begin{remark}
{\it Although some of the functions occuring in this paper have natural ``completions" to so-called harmonic Maass forms, we do not need this general formulation. We only require their explicit non-holomorphic completions and transformation properties.}
\end{remark}

We also use classical functions of half-integral weight, but only via their explicit transformations listed below. The {\it Dedekind eta-function} $\eta$ is defined by $(q=e^{2\pi i\t})$ \[\eta\left(\tau\right):=q^{\frac{1}{24}}\prod_{n\ge 1}\left(1-q^n\right).\]
The modular transformations 
of $\eta$ are given by (see \cite[Theorem 1.61]{Ono})
\begin{equation}\label{eta}
\eta(\t+1)=\z_{24}\eta(\t),\qquad \eta\left(-\frac{1}{\t}\right)=\sqrt{-i\t}\eta(\t),
\end{equation}
where $\z_N:=e^{\frac{2\pi i}{N}}$. Thus $\eta$ transforms like a modular form of weight $\frac 12$ (with a multiplier).

Next, define the {\it theta function}
\begin{equation}\label{Theta}
\Theta(\t):=\sum_{n\in \mathbb{Z}}q^{n^2}.
\end{equation}
We use the following transformation properties of $\Theta$
\begin{align}\label{theta0}
\Theta(\t+1)=\Theta(\t), \quad \Theta\left(-\frac{1}{2\t}\right)&=\sqrt{-i\t}\Theta\left(\frac{\t}{2}\right).
\end{align}
Again $\Theta$ transforms like a modular form of weight of $\frac 12$ on $\G_0(2)$ (with a multiplier).

We also require the following representation of $\Theta$ in terms of $\eta$ (see \cite[Theorem 1.60]{Ono})
\begin{align}\label{ONO}
\Theta(\t)=\frac{\eta^5(2\t)}{\eta^2(\t)\eta^2(4\t)},\quad \Theta\left(\t+\frac 12\right)=\frac{\eta^2(\t)}{\eta(2\t)},\quad \frac{\eta^2\left(16\t\right)}{\eta\left(8\t\right)}=\sum_{n\ge 0}q^{(2n+1)^2}.
\end{align}



\subsection{Valence formula} In this subsection, we recall two results that allow us to reduce the proof of identities to the computation of finitely many Fourier coefficients. In order to show identities between modular forms, we use the following consequence of the valence formula. First, we need the following result (see \cite[Proposition 1.7]{Ono}) on the index of $\G_0(N)$ in $\textnormal{SL}_2(\mathbb{Z})$.
\begin{proposition}\label{Onoprop}
	Let $N\in \mathbb{N}$. Then
	\[
	\left[\textnormal{SL}_2\left(\mathbb{Z}\right):\G_0(N)\right]=N\prod_{p\mid N}\left(1+\frac 1p\right).
	\]
\end{proposition}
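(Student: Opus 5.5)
The plan is to reduce the index computation to a count in the finite group $\textnormal{SL}_2(\mathbb{Z}/N\mathbb{Z})$. Let $\pi:\textnormal{SL}_2(\mathbb{Z})\to \textnormal{SL}_2(\mathbb{Z}/N\mathbb{Z})$ be reduction modulo $N$. By definition, $\G_0(N)=\pi^{-1}(B_N)$, where $B_N$ is the subgroup of upper triangular matrices in $\textnormal{SL}_2(\mathbb{Z}/N\mathbb{Z})$. If $\pi$ is surjective, then $\pi$ induces a bijection between cosets of $\G_0(N)$ in $\textnormal{SL}_2(\mathbb{Z})$ and cosets of $B_N$ in $\textnormal{SL}_2(\mathbb{Z}/N\mathbb{Z})$. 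Hence
\[
\left[\textnormal{SL}_2(\mathbb{Z}):\G_0(N)\right]=\frac{\left|\textnormal{SL}_2(\mathbb{Z}/N\mathbb{Z})\right|}{\left|B_N\right|}.
\]

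\textbf{Step 1 (surjectivity of $\pi$).} Let $\left(\begin{smallmatrix}a&b\\c&d\end{smallmatrix}\right)$ be integers with $ad-bc\equiv1\pmod N$. First adjust $c$ by a multiple of $N$ so that $\gcd(c,d)=1$. One way is to use Dirichlet's theorem on primes in arithmetic progressions: since $\gcd(c,d,N)=1$, one can choose $d'\equiv d\pmod N$ coprime to $c$ (if $c=0$, replace $c$ by $N$ first). Alternatively, take $d'=d+tN$ with $t$ equal to the product of the primes dividing $c$ but not dividing $d$; this is elementary. With $\gcd(c,d)=1$, write $ad-bc=1+mN$ and solve $xd-yc=m$ in integers. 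Then $\left(\begin{smallmatrix}a-xN&b-yN\\c&d\end{smallmatrix}\right)$ has determinant $1$ and reduces to the given matrix.

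\textbf{Step 2 (counting).} By the Chinese remainder theorem, $\textnormal{SL}_2(\mathbb{Z}/N\mathbb{Z})\cong\prod_{p^e\| N}\textnormal{SL}_2(\mathbb{Z}/p^e\mathbb{Z})$. Reduction $\textnormal{SL}_2(\mathbb{Z}/p^e\mathbb{Z})\to \textnormal{SL}_2(\mathbb{F}_p)$ is surjective, by the same lifting argument as in Step 1. Its kernel consists of the matrices $I+pX$ with determinant condition, which is one linear condition on the entries of $X$ modulo $p^{e-1}$. So the kernel has order $p^{3(e-1)}$. Over $\mathbb{F}_p$, count first rows (any nonzero vector, $p^2-1$ choices) and then second rows completing to determinant $1$ ($p$ choices). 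This gives $|\textnormal{SL}_2(\mathbb{F}_p)|=p(p^2-1)$. Altogether,
\[
\left|\textnormal{SL}_2(\mathbb{Z}/N\mathbb{Z})\right|=N^3\prod_{p\mid N}\left(1-\frac1{p^2}\right).
\]
For $B_N$, the entry $a$ is any unit modulo $N$, $d=a^{-1}$ is then forced, and $b$ is arbitrary. So $|B_N|=\varphi(N)N=N^2\prod_{p\mid N}(1-\frac1p)$. Dividing and using $1-p^{-2}=(1-p^{-1})(1+p^{-1})$ gives $N\prod_{p\mid N}(1+\frac1p)$.

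The only step requiring genuine care is the surjectivity in Step 1, specifically making $c$ and $d$ coprime while preserving their residues modulo $N$. Everything else is routine bookkeeping.
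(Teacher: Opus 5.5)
The paper gives no proof of this proposition; it quotes it from Ono's book and uses it only to evaluate the Sturm bound for $N=2$. Your argument is the standard self-contained proof, and it is correct. You observe that $\Gamma_0(N)=\pi^{-1}(B_N)$, use surjectivity of reduction modulo $N$ to transfer the index to $[\textnormal{SL}_2(\mathbb{Z}/N\mathbb{Z}):B_N]$, and the counts $|\textnormal{SL}_2(\mathbb{Z}/N\mathbb{Z})|=N^3\prod_{p\mid N}(1-p^{-2})$ and $|B_N|=N\varphi(N)$ are right. The elementary coprimality trick in Step 1 also checks out prime by prime: take $d'=d+tN$, with $t$ the product of the primes dividing $c$ but not $d$. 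The only input is that no prime divides all of $c,d,N$, and you handle $c=0$ separately.

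A slightly leaner route avoids computing $|\textnormal{SL}_2(\mathbb{Z}/N\mathbb{Z})|$. Send $\gamma$ to its bottom row modulo $N$, taken up to units. This identifies $\Gamma_0(N)\backslash\textnormal{SL}_2(\mathbb{Z})$ with $\mathbb{P}^1(\mathbb{Z}/N\mathbb{Z})$, since $\gamma'\gamma^{-1}\in\Gamma_0(N)$ iff $c'd-d'c\equiv 0 \pmod N$. Surjectivity of this map is exactly your Step 1 trick applied to a primitive pair. Then one only needs $|\mathbb{P}^1(\mathbb{Z}/p^e\mathbb{Z})|=(p^{2e}-p^{2e-2})/(p^e-p^{e-1})=p^e+p^{e-1}$. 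Your version costs a little more bookkeeping (the kernel count and $|\textnormal{SL}_2(\mathbb{F}_p)|$), but it also yields $[\textnormal{SL}_2(\mathbb{Z}):\Gamma(N)]$ along the way.

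Two small wording points, neither affecting validity:

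\begin{enumerate}
\item The kernel condition is $\mathrm{tr}\,X+p\det X\equiv 0 \pmod{p^{e-1}}$, which is not linear once $e\ge 3$. The count $p^{3(e-1)}$ is nevertheless correct, because the equation determines $x_{11}$ uniquely from the other three entries ($1+px_{22}$ is a unit).
\item The Dirichlet aside, if it means choosing $d'$ prime, fails when $\gcd(d,N)>1$. For example, take $N=8$, $c=3$, $d=12$, $a=b=1$: no prime is $\equiv 4 \pmod 8$. One would have to apply Dirichlet to $d/g$ modulo $N/g$ with $g=\gcd(d,N)$. Since your elementary alternative is complete, you can simply drop that remark.
\end{enumerate}
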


Next, we state Sturm's result \cite{St}.
\begin{theorem}\label{Sturm}
	Let $k\in \mathbb{Z}, N\in \mathbb{N}$, and $f(\t)=\sum_{n\ge 0}c(n)q^n\in M_k(\G_0(N))$. If $c(n)=0$ for all $0\le n\le M$, where
	\[
	M\ge \frac{k}{12}\left[\textnormal{SL}_2\left(\mathbb{Z}\right):\G_0(N)\right],
	\]
	then $f= 0$.
\end{theorem}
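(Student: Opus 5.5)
We argue by contradiction: we transfer $f$ to a form of level one by taking a norm over cosets, and then apply the classical valence formula for $\textnormal{SL}_2(\mathbb{Z})$. Suppose $f\neq 0$. Write $m:=[\textnormal{SL}_2(\mathbb{Z}):\G_0(N)]$, which is computed in Proposition \ref{Onoprop}. Choose right coset representatives $\gamma_1=I,\gamma_2,\dots,\gamma_m$, so that $\textnormal{SL}_2(\mathbb{Z})=\bigsqcup_j \G_0(N)\gamma_j$. With the usual slash operator $f|_k\gamma(\t):=(c\t+d)^{-k}f(\gamma\t)$, define
\[
F(\t):=\prod_{j=1}^m \left(f|_k\gamma_j\right)(\t).
\]

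\textbf{Step 1: $F$ is a level one form.} For $\gamma\in\textnormal{SL}_2(\mathbb{Z})$, right multiplication by $\gamma$ permutes the cosets. Thus $\gamma_j\gamma=\delta_j\gamma_{\sigma(j)}$ for some permutation $\sigma$ and some $\delta_j\in\G_0(N)$. By \eqref{transform} we have $f|_k\delta_j=f$, and hence
\[
(f|_k\gamma_j)|_k\gamma=f|_k\gamma_{\sigma(j)}.
\]
It follows that $F|_{km}\gamma=F$. Each factor $f|_k\gamma_j$ is holomorphic on $\mathbb{H}$. It has an expansion at $\infty$ in nonnegative powers of $q^{1/h_j}$, where $h_j$ is the width of the corresponding cusp; this uses the growth condition at the cusps together with the fact that a modular form is holomorphic there. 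Consequently $F\in M_{km}(\textnormal{SL}_2(\mathbb{Z}))$. Moreover $F\not\equiv 0$, because each factor is a nonzero holomorphic function.

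\textbf{Step 2: order at $\infty$.} Since $\gamma_1=I$ and $c(n)=0$ for $0\le n\le M$, we have $\operatorname{ord}_\infty f\ge M+1$. Every other factor has order $\ge 0$ at $\infty$. Here we measure orders in $q$, so fractional exponents $1/h_j\ge 0$ simply contribute nonnegatively. Hence
\[
\operatorname{ord}_\infty F\ge M+1>\frac{km}{12}.
\]
The products of $q^{1/h_j}$-expansions combine into an honest $q$-expansion, since $F$ is invariant under $\t\mapsto\t+1$.

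\textbf{Step 3: contradiction.} The valence formula for a nonzero $F\in M_{K}(\textnormal{SL}_2(\mathbb{Z}))$ reads
\[
\operatorname{ord}_\infty F+\tfrac12\operatorname{ord}_i F+\tfrac13\operatorname{ord}_\rho F+\sum_{z}\operatorname{ord}_z F=\frac{K}{12}.
\]
All terms on the left are nonnegative, so $\operatorname{ord}_\infty F\le km/12$. This contradicts Step 2, so $f=0$.

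The degenerate cases $k\le 0$ are covered by the same argument. When $k<0$ the valence formula forces $F=0$. When $k=0$ it forces $F$ to be constant, and since $F$ vanishes at $\infty$, again $F=0$.

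The main point requiring care is Step 1. There one must check that the slash action genuinely permutes the factors (right cosets acted on by right multiplication), and that the conjugated factors $f|_k\gamma_j$ have only nonnegative exponents, so that they cannot lower the order of $F$ at $\infty$. Both follow from the definition of a modular form, including holomorphy at all cusps.
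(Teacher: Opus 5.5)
Your argument is correct. The paper gives no proof of this statement; it only quotes it from Sturm \cite{St}. Your construction of the norm $F=\prod_j f|_k\gamma_j\in M_{km}(\textnormal{SL}_2(\mathbb{Z}))$ followed by the level-one valence formula is the standard proof of that cited result.

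One detail is worth making explicit. Under the paper's definition (polynomial growth at the cusps), the nonnegativity of the exponents in the expansion $f|_k\gamma_j=\sum_n a_n e^{2\pi i n\tau/h_j}$ follows from the bound $|a_n|\le C v^{a} e^{2\pi n v/h_j}$, obtained by integrating over one period. Letting $v\to\infty$ gives $a_n=0$ for $n<0$, so you need not appeal to holomorphy at the cusps as a separate assumption.
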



\subsection{Mock theta functions} In this subsection we recall the mock theta function $\o(q)$ and its modular completion due to Zwegers which is used in this paper. Mock theta functions were introduced by Ramanujan and later understood in the framework of harmonic Maass forms.

Recall in particular Ramanujan's third order mock theta function $\o(q)$ and McIntosh's \cite{McIntosh} second order mock theta function $B(q)$ 
\begin{equation*}
\omega(q):=\sum_{n\ge0}\frac{q^{2n(n+1)}}{\left(q;q^2\right)^2_{n+1}},\qquad B(q):=\sum_{n\ge 0}\frac{\left(-q^2;q^2\right)_n q^{n(n+1)}}{\left(q;q^2\right)^2_{n+1}}.
\end{equation*}
Bachraoui and the authors \cite[Theorem 1.1]{BBB} related $S(q)$ to these mock theta functions.\footnote{Note that this identity is used in Section \ref{sec3}, to rewrite the left-hand side of Conjecture \ref{ABConj3}.}
\begin{theorem}\label{BBBthm}
	We have
	\[
	S(q)=2B(-q)-\frac{\left(q;q^2\right)^2_{\infty}}{\left(-q^2;q^2\right)_{\infty}}\o(-q).
	\]
\end{theorem}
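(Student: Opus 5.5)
The plan is to prove Theorem \ref{BBBthm} by pure $q$-series manipulations, introducing auxiliary parameters and specializing at the end. All Pochhammer symbols below are in base $q^2$. I would start from the one-parameter family
\[
F(a,b;t):=\sum_{n\ge 0}\frac{(a;q^2)_n(b;q^2)_n}{(-q^2;q^2)_n}t^n,
\]
so that $S(q)=F(q,q;q^2)$. Writing $\frac{1}{(-q^2;q^2)_n}=\frac{(-q^{2n+2};q^2)_\infty}{(-q^2;q^2)_\infty}$ and expanding $(-q^{2n+2};q^2)_\infty$ by Euler's identity gives a double sum. In it the sum over $n$ is a ${}_2\phi_1$ with denominator parameter $0$, and I would evaluate or transform that inner sum by Heine's transformation (the $c=0$ case). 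Interchanging the order of summation then turns $F(a,b;t)$ into a sum whose terms carry the denominator $(bt;q^2)_{m+1}$, or its square once we set $a=b=q$ and $t=q^2$. This is the shape of the denominators $(q;q^2)_{n+1}^2$ that occur in $\omega$ and $B$ after $q\mapsto -q$.

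Concretely, after the specialization I expect the transformed sum to split into two pieces. The first piece has summand $q^{m(m+1)}(\pm q^2;q^2)_m/(\mp q;q^2)_{m+1}^2$, and I would identify it with $B(-q)$, up to a factor $2$ coming from pairing the terms $m$ and $-m-1$. The second piece has summand $q^{2m(m+1)}/(-q;q^2)_{m+1}^2$ times a factor independent of $m$. Since
\[
\omega(-q)=\sum_{m\ge 0}\frac{q^{2m(m+1)}}{(-q;q^2)_{m+1}^2},
\]
this second piece is $\omega(-q)$. The independent factor should come out as $\frac{(q;q^2)_\infty^2}{(-q^2;q^2)_\infty}$, and I would verify it by a $q$-binomial or $q$-Gauss evaluation. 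If the direct route stalls, I would instead use a Bailey pair relative to $a=q^2$ (base $q^2$) with $\beta_n=\frac{(q;q^2)_n^2}{(q^2;q^2)_n(-q^2;q^2)_n}$. I would compute the corresponding $\alpha_n$ via a ${}_3\phi_2$-type inversion, insert the pair into the Bailey lemma with parameters $\rho_1=q$ and $\rho_2=-q$, and recognize the resulting Hecke-type series. Known Hecke-type expansions of $\omega$ (Andrews) and of $B$ (McIntosh) would then let me match the two pieces.

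A final safeguard is to check agreement of the first several coefficients numerically, which pins down signs and the factor $2$.

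The main obstacle is the middle step: finding the right transformation, whether via Heine/Fine or a Bailey pair, so that the series genuinely splits into exactly $2B(-q)$ plus a product times $\omega(-q)$. The parameters $a=b=q$ make the series nonterminating and only "almost" hypergeometric, because $(q^2;q^2)_n$ is missing from the denominator. Because of this, some limiting argument in an auxiliary parameter $b\to q$ will likely be needed to avoid a $0/0$ at the specialization.
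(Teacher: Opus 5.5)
The paper does not prove this statement itself; it quotes it from \cite[Theorem 1.1]{BBB} and uses it only as input in Section~\ref{sec3}. Judged on its own, your proposal has a genuine gap. The one step that carries all the content, an explicit transformation turning $S(q)$ into exactly $2B(-q)-\frac{(q;q^2)_\infty^2}{(-q^2;q^2)_\infty}\omega(-q)$, is never supplied. The mechanisms you name for it also do not work as stated. After Euler's expansion you have
\[
F(a,b;t)=\frac{1}{(-q^2;q^2)_\infty}\sum_{k\ge0}\frac{q^{k^2+k}}{(q^2;q^2)_k}\sum_{n\ge0}(a;q^2)_n(b;q^2)_n\left(tq^{2k}\right)^n .
\]
The inner series has no $(q^2;q^2)_n$ in the denominator, so it is ${}_3\phi_2(a,b,q^2;0,0;q^2,tq^{2k})$, not a ${}_2\phi_1$. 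Heine's transformation therefore does not apply to it directly, and your last paragraph in effect concedes this. The shape you then predict does not match the target either:
\begin{itemize}
\item $(bt;q^2)_{m+1}$ becomes $(q^3;q^2)_{m+1}$ at $b=q$, $t=q^2$, while $B(-q)$ and $\omega(-q)$ have denominators $(-q;q^2)_{m+1}^2$.
\item Your summand $q^{m(m+1)}(\pm q^2;q^2)_m/(\mp q;q^2)_{m+1}^2$ agrees with the summand $q^{m(m+1)}(-q^2;q^2)_m/(-q;q^2)_{m+1}^2$ of $B(-q)$ for neither choice of sign.
\item ``Pairing $m$ with $-m-1$'' has no meaning for the unilateral sums involved.
\end{itemize}
These features are read off from the desired answer rather than derived.

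The Bailey fallback fails at the first step. Take $\beta_n=\frac{(q;q^2)_n^2}{(q^2;q^2)_n(-q^2;q^2)_n}$ relative to $a=q^2$ in base $q^2$, with $\rho_1=q$ and $\rho_2=-q$. The left-hand side of Bailey's lemma is then
\[
\sum_{n\ge0}(q;q^2)_n(-q;q^2)_n\left(-q^2\right)^n\beta_n=\sum_{n\ge0}\frac{(q^2;q^4)_n(q;q^2)_n^2(-1)^nq^{2n}}{(q^4;q^4)_n}=1-q^2+2q^3+O\left(q^4\right),
\]
whereas $S(q)=1+q^2-2q^3+O(q^4)$. To land on $S(q)$ you need, for instance, $\rho_1=\rho_2=q$ together with $\beta_n=\frac{1}{(-q^2;q^2)_n}$ relative to $a=q^2$. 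Computing the corresponding $\alpha_n$ and evaluating the resulting series is precisely the missing work.

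Even with an expansion of $S$ in hand, the final ``matching'' is not automatic. We have
\[
B(q)=\frac{(-q^2;q^2)_\infty}{(q^2;q^2)_\infty}\sum_{n\in\mathbb{Z}}\frac{(-1)^nq^{2n^2+2n}}{1-q^{2n+1}},\qquad \omega(q)=\frac{1}{(q^2;q^2)_\infty}\sum_{n\in\mathbb{Z}}\frac{(-1)^nq^{3n^2+3n}}{1-q^{2n+1}}.
\]
So the right-hand side of the theorem combines Appell--Lerch sums with different quadratic exponents, the second multiplied by a theta quotient. Identifying a single expansion of $S$ with this combination is itself a nontrivial identity that must be proved. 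Numerical agreement of coefficients is only a consistency check and cannot replace it. As written, the proposal is a list of strategies to try rather than a proof.
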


Moreover, we require Ramanujan's third order mock theta function $f(q)$, which is defined by
\begin{equation*}
f(q):=\sum_{n\ge0}\frac{q^{n^2}}{\left(-q\right)^2_{n}}.
\end{equation*}

 We use the vector-valued\footnote{Throughout we write vectors in bold and their components in non-bold letters with indices as is done here for $\bm{F}$.} functions introduced by Zwegers\footnote{Note that Zwegers used slightly different notation.} \cite{Zwegers}
\begin{align}\label{defF}
\bm{F}\left(\t\right)&=\left(F_1(\t),F_2(\t),F_3(\t)\right)^T:=\left(q^{-\frac{1}{24}}f(q),2q^{\frac 13}\o\!\left(q^{\frac 12}\right), 2q^{\frac 13}\o\!\left(-q^{\frac 12}\right)\right)^T,\nonumber\\
\bm{F}^{-}\left(\t\right)\!&=\!\left(F^{-}_1(\t),F^{-}_2(\t), F^{-}_3(\t)\right)^T\!\nonumber\\
&:=\!-2i\sqrt{3}\int^{i\infty}_{-\overline{\t}}\frac{\left(t_1(w),t_2(w),t_3(w)\right)^T}{\sqrt{-i\left(w+\t\right)}}dw,
\end{align}
where the theta functions $t_1$, $t_2$, and $t_3$ are defined by
\begin{align*}
t_1(\t)&:=-\sum_{n\in \mathbb{Z}}\left(n+\frac 16\right)e^{3\pi i\left(n+\frac 16\right)^2\t},\quad t_2(\t):=\sum_{n\in \mathbb{Z}}(-1)^n \left(n+\frac 13\right)e^{3\pi i\left(n+\frac 13\right)^2\t},\nonumber\\
t_3(\t)&:=-\sum_{n\in \mathbb{Z}}\left(n+\frac 13\right)e^{3\pi i\left(n+\frac 13\right)^2\t}.
\end{align*}
Letting
\[
\widehat{\bm{F}}(\t):=\left(\widehat{F}_1(\t), \widehat{F}_2(\t), \widehat{F}_3(\t)\right)^T,
\]
Zwegers' proved the following theorem \cite{Zwegers} that is central for our arguments as it provides a completion for $\o(q)$. 
\begin{theorem}\label{Zthm}
	The function $\widehat{\bm{F}}$ is a vector-valued real-analytic modular form of weight $\frac 12$ that satisfies the following transformations:
	\begin{align*}
	\widehat{\bm{F}}(\t+1)&=\begin{pmatrix}
	\z^{-1}_{24}&0 &0\\
	0 &0 & \z_3\\
	0&\z_3&0
	\end{pmatrix}\widehat{\bm{F}}(\t),\qquad \widehat{\bm{F}}\left(-\frac{1}{\t}\right)=\sqrt{-i\t}\begin{pmatrix}
	0&1 &0\\
	1 &0 & 0\\
	0&0&-1
	\end{pmatrix}\widehat{\bm{F}}(\t).
	\end{align*}
\end{theorem}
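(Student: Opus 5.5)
The plan is to follow Zwegers' approach: rewrite the three holomorphic components as Appell--Lerch sums (equivalently, as Mordell-integral objects), and then check that the nonholomorphic integrals $F_j^-$ cancel exactly the error terms in their transformation laws. Throughout I take $\widehat{\bm F}:=\bm F-\bm F^-$; the sign convention is fixed by the normalization $-2i\sqrt3$ in \eqref{defF}.

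\textbf{The $T$-transformation.} This is elementary, and I would dispose of it first.
\begin{itemize}
\item For $F_1$: the series $f(q)$ is a power series in $q$, so $F_1(\t+1)=\z_{24}^{-1}F_1(\t)$.
\item For $F_2,F_3$: replacing $\t$ by $\t+1$ sends $q^{\frac12}\mapsto -q^{\frac12}$ and multiplies $q^{\frac13}$ by $\z_3$. Hence $F_2$ and $F_3$ are swapped, each picking up the factor $\z_3$.
\item For the nonholomorphic parts: each $t_j(w)$ is a sum of $e^{3\pi i(n+\frac16)^2w}$ or $e^{3\pi i(n+\frac13)^2w}$. Since $\frac32(n+\frac16)^2\equiv -\frac1{24}\pmod 1$ and $\frac32(n+\frac13)^2\equiv \frac16+\frac{n}{2}\cdot\ldots$, one gets $t_1(w+1)=\z_{24}^{-1}t_1(w)$, and $t_2(w+1)$, $t_3(w+1)$ equal $\z_3$ times the other one (the sign $(-1)^n$ appears or disappears).
\item Substituting $w\mapsto w+1$ in the integral over $[-\overline{\t},i\infty)$ then transfers the same matrix to $\bm F^-$.
\end{itemize}

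\textbf{The $S$-transformation.} This is the core of the argument, and I would split it into three steps.

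\emph{Step 1.} Establish the ``mock'' transformation laws of the holomorphic parts:
\[
\bm F\left(-\frac1\t\right)=\sqrt{-i\t}\,M\bm F(\t)+\bm R(\t),\qquad M=\begin{pmatrix}0&1&0\\1&0&0\\0&0&-1\end{pmatrix},
\]
where $\bm R$ is an explicit vector of Mordell integrals $\int_{\mathbb R} e^{3\pi i \t x^2}\frac{\cosh(\ldots)}{\cosh(\ldots)}dx$. These are Watson's classical transformation formulas for $f$ and $\o$. I would rederive them from representations of the form
\[
f(q)=\frac{2}{(q)_\infty}\sum_{n\in\mathbb Z}\frac{(-1)^nq^{\frac{3n^2+n}{2}}}{1+q^n},
\]
and analogously for $\o$. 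One then applies Zwegers' transformation of $\mu(u,v;\t)$ under $\t\mapsto-\frac1\t$, which produces the Mordell integral $h$, together with \eqref{eta} for the eta quotients.

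\emph{Step 2.} Transform the integrals. By Poisson summation, $(t_1,t_2,t_3)$ is a vector-valued unary theta function of weight $\frac32$ satisfying
\[
\bm t\left(-\frac1w\right)=-i(-iw)^{\frac32}M\bm t(w).
\]
Now substitute $w\mapsto -\frac1w$ in $\bm F^-(-\frac1\t)$. The integration path from $\frac1{\overline\t}$ to $i\infty$ becomes a path from $-\overline\t$ to $0$. Writing this as
\[
\int_{-\overline\t}^{i\infty}-\int_0^{i\infty}
\]
gives $\sqrt{-i\t}\,M\bm F^-(\t)$ minus a period integral $\int_0^{i\infty}\frac{\bm t(w)}{\sqrt{-i(w+\t)}}dw$.

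\emph{Step 3.} Identify this period integral with $\bm R(\t)$. This is done by inserting the Fourier expansion of $\bm t$, using the standard identity that expresses $h(z;\t)$ as $\int_0^{i\infty}\frac{g_{a,b}(w)}{\sqrt{-i(w+\t)}}dw$ for a weight-$\frac32$ theta function $g_{a,b}$ (Zwegers, Thm.~1.16), and then matching characteristics $a,b$ and the constant $-2i\sqrt3$.

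\textbf{Main obstacle.} Step 3 is where I expect the difficulty to lie: the precise bookkeeping of constants, characteristics and signs so that the error terms cancel componentwise, especially the $-1$ entry in the third row. I would first verify numerically, at a sample $\t$, that $\bm R$ equals the period integral, and then carry out the general matching.

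Once the $S$- and $T$-laws hold, modularity for all of $\mathrm{SL}_2(\mathbb Z)$ follows because $S$ and $T$ generate the group. Real-analyticity is clear from the integral representation, since $\bm t$ decays exponentially at $i\infty$.
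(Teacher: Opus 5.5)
You are reconstructing the right proof: the paper only quotes this theorem from Zwegers, and your outline is essentially Zwegers' argument. That outline is Watson-type laws with Mordell-integral errors, then unfolding $\bm F^-(-\frac1\tau)$ via $w\mapsto-\frac1w$ into $\int_{-\overline{\tau}}^{i\infty}-\int_0^{i\infty}$, then matching the period integral with the Mordell integrals. However, two concrete inputs you commit to are wrong, and with them the argument does not close.

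\textbf{The sign of the completion.} Zwegers' completion is $\bm F-\bm G$ with $\bm G:=2i\sqrt3\int_{-\overline{\tau}}^{i\infty}\frac{\bm t(w)}{\sqrt{-i(w+\tau)}}dw$. With the normalization in \eqref{defF}, $\bm G=-\bm F^-$, so the modular object is $\bm F+\bm F^-$, not $\bm F-\bm F^-$. This is also what the paper uses in Proposition \ref{prop2}, where $\widehat A=q^{\frac13}A+A^-$ with $A^-=-\frac14\bigl(\Theta(\frac\tau2)F_2^-+\Theta(\frac{\tau+1}2)F_3^-\bigr)$.

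Concretely, write $\bm F(-\frac1\tau)=\sqrt{-i\tau}\,M\bm F(\tau)+\bm R(\tau)$. Step 2, done correctly, gives $\bm F^-(-\frac1\tau)=\sqrt{-i\tau}\,M\bigl(\bm F^-(\tau)-\bm P(\tau)\bigr)$ with $\bm P(\tau):=-2i\sqrt3\int_0^{i\infty}\frac{\bm t(w)}{\sqrt{-i(w+\tau)}}dw$. The true identity is $\bm R=\sqrt{-i\tau}\,M\bm P$. At $\tau=i$ the first components are $F_1(i)-F_2(i)\approx 1.0326$ and $2\sqrt3\int_0^\infty t_2(is)(1+s)^{-\frac12}ds\approx1.0326$. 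So for your $\bm F-\bm F^-$ the two error terms add up to $2\bm R\neq0$ instead of cancelling. The function you set out to prove modular is not modular, and your Step 3 matching would fail.

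\textbf{The constant in the weight $\frac32$ law.} It is $-1$, not $-i$: $\bm t(-\frac1w)=-(-iw)^{\frac32}M\bm t(w)$. Take $g_{a,b}(\tau):=\sum_{\nu\in a+\mathbb Z}\nu e^{\pi i\nu^2\tau+2\pi i\nu b}$, which satisfies $g_{a,b}(-\frac1\tau)=ie^{2\pi iab}(-i\tau)^{\frac32}g_{b,-a}(\tau)$. Since $t_1(w)=-g_{\frac16,0}(3w)$, you get $t_1(-\frac1w)=-i(-\frac{iw}{3})^{\frac32}g_{0,-\frac16}(\frac w3)$. Splitting $n$ by residues mod $3$ gives $g_{0,-\frac16}(\frac w3)=-3\sqrt3\,i\,t_2(w)$, hence $t_1(-\frac1w)=-(-iw)^{\frac32}t_2(w)$. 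The substitution $z=-\frac1w$ turns a constant $\kappa$ in front of $(-iw)^{\frac32}M$ into the factor $-\kappa\sqrt{-i\tau}\,M$. Your $\kappa=-i$ would therefore give $i\sqrt{-i\tau}\,M\bm F^-(\tau)$, contradicting the conclusion you state in Step 2.

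\textbf{A smaller slip in the $T$-step.} In fact $t_1(w+1)=\zeta_{24}t_1(w)$, $t_2(w+1)=\zeta_3^{-1}t_3(w)$ and $t_3(w+1)=\zeta_3^{-1}t_2(w)$, the conjugates of what you wrote. The law for $\bm F^-$ still holds, because moving the lower limit $-\overline{\tau}-1$ back to $-\overline{\tau}$ brings in $\bm t(w-1)$, which is exactly the $\bm F$-matrix applied to $\bm t(w)$.

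With these fixes your outline proves the theorem: take $\widehat{\bm F}=\bm F+\bm F^-$ and $\kappa=-1$. Step 3 is then Zwegers' identity writing the Mordell integral $h$ as a period integral over $(0,i\infty)$.
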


\subsection{Indefinite theta functions} In this subsection, we recall Zwegers' construction \cite{Zw} of indefinite theta functions which is used below. Let $n\in \mathbb{N}$ and fix a quadratic form $Q$ on $\mathbb{R}^{\ell}$ ($\ell\in \mathbb{N}$) of signature $(n,1)$ with associated matrix $A$, so that  $Q(\bm{x})=\frac 12\bm{x}^TA\bm{x}$. In this paper we have $\ell=2$ and $n=1$. Let $B(\bm{x},\bm{y}):=Q(\bm{x}+\bm{y})-Q(\bm{x})-Q(\bm{y})$ denote the corresponding bilinear form. Note that $B(\bm{x},\bm{x})=2Q(\bm{x})$. The set of vectors $\bm{c}\in \mathbb{R}^{\ell}$ with $Q(\bm{c})<0$ splits into two connected  components. Two vectors $\bm{c_1}$ and $\bm{c_2}$ lie in the same component iff $B(\bm{c_1},\bm{c_2})<0$. We fix one of the components and denote it by $C_{Q}$. Picking any vector $\bm{c_0}\in C_Q$, we have
\begin{equation}\label{CQ}
C_Q=\left\{\bm{c}\in \mathbb{R}^{\ell}:Q(\bm{c})<0, B\left(\bm{c},\bm{c_0}\right)<0\right\}.
\end{equation}
Consider (the possibly empty set)
\[
S_Q:=\left\{\bm{c}\in \mathbb{Z}^{\ell}:\gcd\left(c_1,c_2,\cdots,c_{\ell}\right)=1,\ Q(\bm{c})=0,\ B\left(\bm{c},\bm{c_0}\right)<0\right\}.
\]
Let $\overline{C}_Q:=C_Q\cup S_Q$ and define for, $c\in \overline{C}_Q$,
\[
R(\bm{c}):=\begin{cases}
\mathbb{R}^{\ell} &\text{if}\ \bm{c}\in C_Q,\\
\left\{\bm{a}\in \mathbb{R}^{\ell}:B\left(\bm{c},\bm{a}\right)\notin\mathbb{Z}\right\} &\text{if}\ \bm{c}\in S_Q.
\end{cases}
\]
For $\bm{c_1},\bm{c_2}\in \overline{C}_Q$, we define the {\it theta function with characteristic} $\bm{a}\in R(\bm{c_1})\cap R(\bm{c_2})$ and $\bm{b}\in \mathbb{R}^{\ell}$ by\footnote{We use the notation $\vartheta_{\bm{a},\bm{b}}(\t)$ if $\bm{c_1}$ and $\bm{c_2}$ are fixed.} 
\begin{equation}\label{theta}
\vartheta_{\bm{a},\bm{b}}(\t)=\vartheta^{\bm{c_1},\bm{c_2}}_{\bm{a},\bm{b}}(\t)
:=\sum_{\bm{n}\in \mathbb{Z}^{\ell}+\bm{a}}\varrho\left(\bm{n};\t\right)e^{2\pi iB\left(\bm{b},\bm{n}\right)}q^{Q(\bm{n})},
\end{equation}
where
\[
\varrho\left(\bm{n};\t\right):=\varrho^{\bm{c_1}}(\bm{n};\t)-\varrho^{\bm{c_2}}(\bm{n};\t)
\]
with ($\t=u+iv$)
\[
\varrho^{\bm{c}}(\bm{n};\t):=\begin{cases}
E\left(\frac{B\left(\bm{c},\bm{n}\right)\sqrt{v}}{\sqrt{-Q(\bm{c})}}\right) &\text{if}\ c\in C_Q,\\
\textnormal{sgn}\left(B(\bm{c},\bm{n})\right) &\text{if}\ c\in S_Q.
\end{cases}
\]
Here the odd entire function $E$ is defined as 
\[
E(w):=2\int_{0}^{w}e^{-\pi t^2}dt
\]
with the usual convention that $\textnormal{sgn}(w):=\frac{w}{|w|}$ for $w\in \mathbb{R}\setminus\{0\}$ and $\textnormal{sgn}(0):=0$. Note that (see \cite[Lemma 1.7]{Zw}) 
\begin{equation}\label{Ebeta}
E(x)=\textnormal{sgn}(x)\left(1-\b\left(x^2\right)\right),\ \text{with}\ \b(x)\!:=\!\!\int_{u}^{\infty}\!\!w^{-\frac 12}e^{-\pi w}dw=\frac{\G\left(\frac 12,\pi u\right)}{\sqrt{\pi}},
\end{equation}
where $\G(a,v):=\int_{v}^{\infty}e^{-t}t^{a-1}dt$, for $v\in \mathbb{R}^{+}$, is the {\it incomplete gamma function}.\\
Due to Zwegers' \cite[Corollary 2.9]{Zw}, $\vartheta_{\bm{a},\bm{b}}$ satisfies the following transformations:
\begin{theorem}\label{Zwmainthm1}
	The function $\vartheta_{\bm{a},\bm{b}}$ has the following elliptic and modular transformation properties:
	\begin{enumerate}
		\item $\vartheta_{\bm{a}+\bm{\l},\bm{b}}=\vartheta_{\bm{a},\bm{b}}$ for $\bm{\l}\in \mathbb{Z}^{\ell}$,
		\item $\vartheta_{\bm{a},\bm{b}+{\bm \mu}}=e^{2\pi iB(\bm{a},\bm{\mu})}\vartheta_{\bm{a},\bm{b}}$ for $\bm{\mu}\in A^{-1}\mathbb{Z}^{\ell}$,
		\item $\vartheta_{-\bm{a},-\bm{b}}=-\vartheta_{\bm{a},\bm{b}}$,
		\item $\vartheta_{\bm{a},\bm{b}}(\t+1)=e^{-2\pi i Q(\bm{a})-\pi i B(A^{-1}A^*,\bm{a})}\vartheta_{\bm{a},\bm{b}+\bm{a}+\frac 12A^{-1}A^{*}}(\t)$ with $A^*$ denoting the vector of diagonal elements of $A$.
		\item If $\bm{a}, \bm{b}\in R(\bm{c_1})\cap R(\bm{c_2})$, then
		\[
		\vartheta_{\bm{a},\bm{b}}\left(-\frac 1\t\right)=\frac{i}{\sqrt{-\det (A)}}\left(-i\t\right)^{\frac{\ell}{2}}e^{2\pi i B(\bm{a},\bm{b})}\sum_{\bm{\nu}\in A^{-1}\mathbb{Z}^{\ell}\pmod{\mathbb{Z}^{\ell}}}\vartheta_{\bm{b}+\bm{\nu},-\bm{a}}(\t).
		\]
	\end{enumerate}
\end{theorem}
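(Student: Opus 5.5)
The plan is to follow Zwegers: derive (1)--(4) directly from the definition \eqref{theta}, and obtain (5) by Poisson summation. Throughout, write $e(x):=e^{2\pi i x}$.

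\emph{Properties (1)--(4).} Property (1) holds because $\mathbb{Z}^{\ell}+\bm{a}+\bm{\l}=\mathbb{Z}^{\ell}+\bm{a}$, and the summand depends only on $\bm{n}$.

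For (2), take $\bm{n}=\bm{m}+\bm{a}$ with $\bm{m}\in\mathbb{Z}^{\ell}$. Then
\[
B(\bm{\mu},\bm{n})=\bm{\mu}^TA\bm{m}+B(\bm{\mu},\bm{a}),
\]
and $\bm{\mu}^TA\bm{m}=(A\bm{\mu})^T\bm{m}\in\mathbb{Z}$, which yields the factor $e^{2\pi iB(\bm{a},\bm{\mu})}$.

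For (3), substitute $\bm{n}\mapsto-\bm{n}$. The factors $q^{Q(\bm{n})}$ and $e(B(\bm{b},\bm{n}))$ are unchanged under the simultaneous sign change of $\bm{n}$ and $\bm{b}$. Each $\varrho^{\bm{c}}$ is odd in $\bm{n}$, because $E$ and $\textnormal{sgn}$ are odd.

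For (4), note that $\varrho$ depends on $\t$ only through $v$, so $\t\mapsto\t+1$ only multiplies each term by $e(Q(\bm{n}))$. Writing $\bm{n}=\bm{m}+\bm{a}$ gives
\[
Q(\bm{n})=Q(\bm{m})+B(\bm{a},\bm{n})-Q(\bm{a}).
\]
For integral $\bm{m}$ we have $Q(\bm{m})\equiv\frac12\bm{m}^TA^*\pmod 1$, since the off-diagonal entries appear twice. This can be rewritten as $B(\frac12A^{-1}A^*,\bm{m})=B(\frac12A^{-1}A^*,\bm{n})-B(\frac12A^{-1}A^*,\bm{a})$. Collecting these terms gives exactly the stated shift $\bm{b}\mapsto\bm{b}+\bm{a}+\frac12A^{-1}A^*$, together with the constant $e^{-2\pi iQ(\bm{a})-\pi iB(A^{-1}A^*,\bm{a})}$.

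\emph{Property (5), case $\bm{c_1},\bm{c_2}\in C_Q$.} Consider
\[
g(\bm{x}):=\left(\varrho^{\bm{c_1}}(\bm{x};\t)-\varrho^{\bm{c_2}}(\bm{x};\t)\right)e(\t Q(\bm{x})),\qquad \bm{x}\in\mathbb{R}^{\ell}.
\]
First I will show that $g$ is a Schwartz function. In the cone where $Q(\bm{x})\ge0$, the Gaussian factor controls it. Outside that cone, $B(\bm{c_1},\bm{x})$ and $B(\bm{c_2},\bm{x})$ have the same sign, and \eqref{Ebeta} turns the difference into $\b$-terms that decay like $e^{-\pi B(\bm{c},\bm{x})^2v/(-Q(\bm{c}))}$. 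This compensates the growth of $e(\t Q(\bm{x}))$, because $Q+B(\bm{c},\cdot)^2/(-2Q(\bm{c}))$ is positive definite.

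Poisson summation over $\mathbb{Z}^{\ell}+\bm{a}$ with the character $e(B(\bm{b},\cdot))$ then reduces (5) to computing the Fourier transform of $\varrho^{\bm{c}}(\bm{x};\t)e(\t Q(\bm{x}))$. Each single term is not integrable on its own, so I would compute it as a tempered distribution. Alternatively, I would compute it directly for the difference after decomposing $\bm{x}=\lambda\bm{c}+\bm{y}$ with $B(\bm{c},\bm{y})=0$. On $\bm{c}^{\perp}$ the form $Q$ is positive definite, and there the usual Gaussian transform applies. Along $\bm{c}$ one needs the one-dimensional identity: the transform of $E(\lambda\sqrt{-2Q(\bm{c})v})\,e(\t Q(\bm{c})\lambda^2)$ is again of the same shape with $\t$ replaced by $-1/\t$, times $\sqrt{-i\t}$-type factors. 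This identity can be checked by differentiating in $\lambda$, which turns $E$ into a Gaussian, and then integrating.

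Assembling these pieces gives the factor $\frac{i}{\sqrt{-\det A}}(-i\t)^{\ell/2}e^{2\pi iB(\bm{a},\bm{b})}$. The dual lattice of $\mathbb{Z}^{\ell}$ with respect to $B$ is $A^{-1}\mathbb{Z}^{\ell}$, and this produces the sum over $\bm{\nu}\in A^{-1}\mathbb{Z}^{\ell}/\mathbb{Z}^{\ell}$ of $\vartheta_{\bm{b}+\bm{\nu},-\bm{a}}$.

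\emph{Property (5), case $\bm{c_j}\in S_Q$, and the main obstacle.} When some $\bm{c_j}\in S_Q$, I would approximate $\bm{c_j}$ by $\bm{c}_{j,t}\in C_Q$ with $\bm{c}_{j,t}\to\bm{c_j}$. As $-Q(\bm{c}_{j,t})\to0$, the function $\varrho^{\bm{c}_{j,t}}\to\textnormal{sgn}(B(\bm{c_j},\cdot))$ pointwise away from the line $B(\bm{c_j},\bm{n})=0$. The condition $\bm{a}\in R(\bm{c_j})$ guarantees that no lattice point $\bm{n}\in\mathbb{Z}^{\ell}+\bm{a}$ lies on that line, and $\bm{b}\in R(\bm{c_j})$ gives the same for the dual side. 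It then remains to justify exchanging the limit with both lattice sums by a uniform dominating bound.

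I expect this limiting step, together with the one-dimensional Fourier computation, to be the main obstacle. Near the isotropic direction the decay is only marginal, so the dominating bound has to use that $B(\bm{c_j},\bm{n})$ stays bounded away from $0$ on the coset, uniformly in $t$.
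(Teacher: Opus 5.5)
\emph{Scope and main gap.} The paper does not prove this statement; it quotes it from Zwegers \cite[Corollary 2.9]{Zw}, so the comparison is with Zwegers' argument. Your derivations of (1)--(4) are the standard direct ones and are fine. In (4) you also need $\frac12A_{ii}m_i^2\equiv\frac12A_{ii}m_i\pmod{1}$, not only that off-diagonal entries occur twice.

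The gap is in (5) for $\bm{c_1},\bm{c_2}\in C_Q$. This is exactly the case the paper uses, since $Q(\bm{c_1})=Q(\bm{c_2})=-6$. The step that fails is the computation of $\widehat g$:
\begin{itemize}
\item A single term $\varrho^{\bm{c}}(\bm{x};\tau)e^{2\pi i\tau Q(\bm{x})}$ has modulus tending to $e^{2\pi v|Q(\bm{x})|}$ on the negative cone. It is therefore neither integrable nor a tempered distribution, so there is no Fourier transform to compute.
\item The same holds for your one-dimensional function $E(\alpha\lambda)e^{2\pi i\tau Q(\bm{c})\lambda^2}$, which grows like $e^{2\pi v|Q(\bm{c})|\lambda^2}$. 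Differentiating in $\lambda$ does not rescue it, because the product rule leaves the term $E(\alpha\lambda)\lambda e^{2\pi i\tau Q(\bm{c})\lambda^2}$, which still grows.
\item Your fallback of decomposing $\bm{x}=\lambda\bm{c}+\bm{y}$ for the difference also fails. That splitting is adapted to one vector: with $\bm{c}=\bm{c_1}$, the function $\varrho^{\bm{c_2}}$ depends on both $\lambda$ and $\bm{y}$, so the integral does not factor.
\end{itemize}
So the actual content of (5), namely $\widehat g$, is never obtained.

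\emph{What is missing.} You need a device that treats $\varrho^{\bm{c_1}}-\varrho^{\bm{c_2}}$ as a whole.

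Zwegers does this with Vign\'eras' theorem. Each $\varrho^{\bm{c}}$ satisfies Vign\'eras' second-order differential equation with eigenvalue $0$; this reduces to $E''(w)=-2\pi wE'(w)$. Only the difference has the decay Vign\'eras requires. His theorem, whose proof ends in Poisson summation, then gives weight $\frac{\ell}{2}$ and the stated formula.

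An alternative closer to your plan uses convexity of $C_Q$. Write $\varrho^{\bm{c_1}}-\varrho^{\bm{c_2}}=\int_0^1\frac{\partial}{\partial s}\varrho^{\bm{c}(s)}\,ds$ along the segment from $\bm{c_2}$ to $\bm{c_1}$. Let $\bm{c}'(s)^{\perp}$ be the component of $\bm{c}'(s)$ orthogonal to $\bm{c}(s)$, and let $\bm{x}^{+},\bm{x}^{-}$ be the components of $\bm{x}$ in $\bm{c}(s)^{\perp}$ and $\mathbb{R}\bm{c}(s)$. Multiplied by $e^{2\pi i\tau Q(\bm{x})}$, each integrand equals a factor depending only on $s$ times $\sqrt{v}\,B(\bm{c}'(s)^{\perp},\bm{x})\,e^{2\pi i(\tau Q(\bm{x}^{+})+\overline{\tau}Q(\bm{x}^{-}))}$. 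This is a Schwartz function with an explicit Fourier transform. Poisson summation therefore applies for each $s$, and you integrate afterwards.

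\emph{Smaller fixes.}
\begin{itemize}
\item In the Schwartz estimate, split according to whether $B(\bm{c_1},\bm{x})$ and $B(\bm{c_2},\bm{x})$ have opposite signs (there $Q(\bm{x})\ge\delta\|\bm{x}\|^2$), not by the sign of $Q$. The factor $e^{-2\pi vQ(\bm{x})}$ gives no decay near the null cone.
\item Along $\bm{c}$ the argument of $E$ is $-2\lambda\sqrt{-Q(\bm{c})v}$, not $\lambda\sqrt{-2Q(\bm{c})v}$. With your normalization the Gaussian coming from $E'$ would exactly cancel the growth instead of beating it.
\end{itemize}
Your limiting argument for the $S_Q$ case is a reasonable plan once the $C_Q$ case is in place.
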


Moreover, we also require transformation of $\vartheta_{\bm{a},\bm{b}}$ with respect to a subgroup of $\text{GL}_2(\mathbb{R})$. Consider the subgroup of transformations preserving the quadratic form
\[
O_A(\mathbb{R}):=\left\{C\in \text{GL}_{\ell}\left(\mathbb{R}\right): C^T AC=A\right\}.
\]  
We then restrict to those matrices $C$ that leave $C_Q$ invariant, i.e., $B(C\bm{c}, \bm{c})<0$ for $\bm{c}\in C_Q$. Set\footnote{By \eqref{CQ} we only need to show that $B(C\bm{c},\bm{c})<0$ for one $c\in C_Q$.}
\[
O^{+}_A(\mathbb{R}):=\left\{C\in\text{GL}_{\ell}(\mathbb{R}): C^TAC=A,\ B\left(C\bm{c},\bm{c}\right)<0\ \text{for all}\ c\in C_Q \right\}.
\]
and (see \cite[Definition 2.10]{Zw})
\[
O^{+}_A(\mathbb{Z}):=O^{+}_A(\mathbb{R})\cap \text{GL}_{\ell}(\mathbb{R}).
\] 
From \cite[Corollary 2.15]{Zw}, we have the following result of Zwegers.
\begin{theorem}\label{Zwmainthm2}
	Let $C\in O^{+}_A(\mathbb{Z})$, $\bm{c_1}, \bm{c_2}\in \overline{C}_Q$, and $\bm{a}\in R(\bm{c_1})\cap R(\bm{c_2})$. Then
	\[
	\vartheta^{C\bm{c_1}, C\bm{c_2}}_{C\bm{a},C\bm{b}}(\t)=\vartheta^{\bm{c_1}, \bm{c_2}}_{\bm{a},\bm{b}}(\t).
	\]
\end{theorem}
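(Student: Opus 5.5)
The proof is a change of summation variable $\bm{n}\mapsto C\bm{n}$ in the defining series \eqref{theta}. We check that every ingredient of the summand (the lattice coset, the quadratic and linear exponents, and the weight $\varrho$) is carried to the corresponding ingredient of the transformed theta function.

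First I would record the algebraic identities that come from $C^TAC=A$. For all $\bm{x},\bm{y}\in\mathbb{R}^{\ell}$ we have
\[
Q(C\bm{x})=Q(\bm{x}),\qquad B(C\bm{x},C\bm{y})=B(\bm{x},\bm{y}).
\]
Since $C\in O^+_A(\mathbb{Z})$ has integer entries and integral inverse, $\bm{n}\mapsto C\bm{n}$ is a bijection from $\mathbb{Z}^{\ell}+\bm{a}$ onto $\mathbb{Z}^{\ell}+C\bm{a}$. Consequently
\[
e^{2\pi i B(C\bm{b},C\bm{n})}q^{Q(C\bm{n})}=e^{2\pi iB(\bm{b},\bm{n})}q^{Q(\bm{n})}.
\]

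Second I would check that the transformed data are admissible, i.e.\ $C\bm{c_j}\in\overline{C}_Q$ and $C\bm{a}\in R(C\bm{c_1})\cap R(C\bm{c_2})$. The case $\bm{c}\in C_Q$ is direct: $Q(C\bm{c})=Q(\bm{c})<0$ and $B(C\bm{c},\bm{c})<0$. By the characterization of $C_Q$ recalled above (two negative vectors lie in the same component iff their pairing is negative), this gives $C\bm{c}\in C_Q$.

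The case $\bm{c}\in S_Q$ takes more care. The vector $C\bm{c}$ is integral and primitive, since a common divisor of its entries would divide the entries of $\bm{c}=C^{-1}(C\bm{c})$, and $Q(C\bm{c})=0$. To get $B(C\bm{c},\bm{c_0})<0$, I would approximate $\bm{c}$ by vectors $\bm{c}'\in C_Q$. For these, $C\bm{c}'\in C_Q$, so $B(C\bm{c}',\bm{c_0})<0$, and in the limit $B(C\bm{c},\bm{c_0})\le 0$. Equality is excluded by signature $(n,1)$: the orthogonal complement of the negative vector $\bm{c_0}$ is positive definite, so a nonzero isotropic vector cannot be orthogonal to it. The characteristic condition follows from $B(C\bm{c},C\bm{a})=B(\bm{c},\bm{a})\notin\mathbb{Z}$.

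Third, the weights match, because $Q(C\bm{c})=Q(\bm{c})$ and $B(C\bm{c},C\bm{n})=B(\bm{c},\bm{n})$ give
\[
\varrho^{C\bm{c}}(C\bm{n};\t)=\varrho^{\bm{c}}(\bm{n};\t)
\]
in both cases ($E$ or $\textnormal{sgn}$). Hence the summands correspond term by term. The series converges absolutely (Zwegers' convergence result for admissible $\bm{c_1},\bm{c_2},\bm{a}$), so the rearrangement is justified and the two theta functions agree.

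The only step needing genuine thought is the boundary case $\bm{c}\in S_Q$, specifically showing that $C$ preserves the chosen half of the null cone. The limiting argument together with the signature observation should handle it.
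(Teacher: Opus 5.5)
Your proof is correct and is essentially the argument behind the statement. The paper gives no proof of its own but quotes Zwegers' Corollary 2.15, which rests on exactly the reindexing $\bm{n}\mapsto C\bm{n}$ together with the invariance of $C_Q$, $S_Q$ and $R(\bm{c})$ under $C$; your isotropic case (approximating by $\bm{c}+\varepsilon\bm{c_0}\in C_Q$ and using that $\bm{c_0}^{\perp}$ is positive definite) fills in that detail correctly, and you rightly read $O^{+}_A(\mathbb{Z})$ as $O^{+}_A(\mathbb{R})\cap\mathrm{GL}_{\ell}(\mathbb{Z})$ (the paper's $\mathrm{GL}_{\ell}(\mathbb{R})$ is a misprint), since integrality is what makes $\mathbb{Z}^{\ell}+\bm{a}\to\mathbb{Z}^{\ell}+C\bm{a}$ a bijection and gives $CS_Q\subseteq S_Q$.
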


\section{An indefinite theta function}\label{sec2}

In this section, we write the right-hand side of Conjecture \ref{ABConj3} as the holomorphic part of an indefinite theta function and determine its non-holomorphic contribution. Replacing $q\mapsto q^{\frac 12}$, we set 
\begin{equation*}
H(q):=\sum_{n\ge 0}(-1)^nq^{3n^2+2n}\left(1+q^{2n+1}\right)\sum_{|j|\le n}(-1)^{j}q^{-j^2}.
\end{equation*}


Define 
\begin{align}\nonumber
H^{-}(\t)&:=-\frac{1}{\sqrt{\pi}}\sum_{\varepsilon\in\{0,1\}}\!\!(-1)^{\varepsilon}\sum_{\substack{n\in \mathbb{Z}\\r\in \mathbb{Z}}}\text{sgn}\left(2r+\varepsilon+\frac 13\right)\\\label{hhat1}
&\hspace{1 cm}\times\Gamma\left(\frac 12,6\pi\left(2r+\varepsilon+\frac 13\right)^2v\right)q^{-\frac 32\left(2r+\varepsilon+\frac 13\right)^2+\frac 12\left(2n+\varepsilon+1\right)^2}.
\end{align}
We then set
\begin{equation}\label{hhat2}
\widehat{H}(\t):=q^{\frac 13}H(q)+H^{-}(\t).
\end{equation}

We next realize $\widehat{H}$ as an indefinite theta function of Zwegers. For this, let 
\begin{align*}
\bm{a}&:=\left(\frac 13,0\right)^T,\quad \bm{b}:=\left(\frac{1}{12},-\frac 14\right)^T
\end{align*}
and define the quadratic form
\[
Q(\bm{n}):=3n^2_1-n^2_2.
\]
We choose vectors $\bm{c_1}$ and $\bm{c_2}$ as
\[
\bm{c_1}=(1,3)^T,\quad \bm{c_2}=(-1,3)^T
\]
and fix the component $C_{Q}$ so that $\bm{c_1}\in C_Q$. Then also $\bm{c_2}\in C_Q$.

\begin{proposition}\label{prop1}
We have
\[
\widehat{H}(\t)=\frac{e^{-\frac{\pi i}{3}}}{2}\vartheta_{\bm{a},\bm{b}}(\t).
\]
\end{proposition}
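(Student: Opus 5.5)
The plan is to evaluate the indefinite theta function \eqref{theta} explicitly for the data $\bm{a},\bm{b},\bm{c_1},\bm{c_2}$ given above. Then, via \eqref{Ebeta}, we split $\varrho$ into a sign part, which should reproduce $q^{\frac13}H(q)$, and an incomplete-gamma part, which should reproduce $H^{-}(\t)$.

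\textbf{Setup.} Here $A=\mathrm{diag}(6,-2)$, so $B(\bm{x},\bm{y})=6x_1y_1-2x_2y_2$ and $Q(\bm{c_1})=Q(\bm{c_2})=-6$. Moreover $B(\bm{c_1},\bm{c_2})=-24<0$, so both vectors lie in $C_Q$ and $R(\bm{c_j})=\mathbb{R}^2$.

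Write $\bm{n}=(m+\frac13,k)$ with $m,k\in\mathbb{Z}$. Then
\[
e^{2\pi i B(\bm{b},\bm{n})}=e^{\pi i(m+\frac13)+\pi i k}=e^{\frac{\pi i}{3}}(-1)^{m+k},
\]
so the prefactor $\frac{e^{-\pi i/3}}{2}$ cancels the constant phase. Since $B(\bm{c_1},\bm{n})=6(n_1-n_2)$ and $B(\bm{c_2},\bm{n})=-6(n_1+n_2)$, and $E$ is odd, we get
\[
\varrho(\bm{n};\t)=E\left(\sqrt{6v}(n_1-n_2)\right)+E\left(\sqrt{6v}(n_1+n_2)\right).
\]
Now insert $E(x)=\mathrm{sgn}(x)(1-\b(x^2))$. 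Note that $n_1\pm n_2\neq 0$ always, because $n_1\in\mathbb{Z}+\frac13$.

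\textbf{Holomorphic part.} The sign factor $\mathrm{sgn}(n_1-k)+\mathrm{sgn}(n_1+k)$ equals $2$ if $n_1>|k|$, equals $-2$ if $n_1<-|k|$, and vanishes otherwise.
\begin{itemize}
\item For $n_1>|k|$ we have $m\ge|k|$ and $3(m+\frac13)^2-k^2=3m^2+2m+\frac13-k^2$, with sign $(-1)^{m+k}$.
\item For $n_1<-|k|$, substitute $m=-n-1$ with $n\ge|k|$. Then $3(n+\frac23)^2=3n^2+2n+(2n+1)+\frac13$, and the sign becomes $-(-1)^{n+1+k}=(-1)^{n+k}$.
\end{itemize}
Together these give exactly $q^{\frac13}\sum_{n\ge0}(-1)^nq^{3n^2+2n}(1+q^{2n+1})\sum_{|k|\le n}(-1)^kq^{-k^2}=q^{\frac13}H(q)$.

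\textbf{Non-holomorphic part.} This part equals
\[
-\frac{1}{2\sqrt\pi}\sum (-1)^{m+k}\sum_{\pm}\mathrm{sgn}(n_1\mp k)\,\G\left(\tfrac12,6\pi(n_1\mp k)^2v\right)q^{3n_1^2-k^2}.
\]
The map $k\mapsto-k$ interchanges the two terms, so it is twice the ``$-$'' term, and the factor $\frac12$ disappears. In that term set $u:=n_1-k=j+\frac13$ with $j:=m-k$. A short computation gives
\[
3n_1^2-k^2=-\tfrac32u^2+\tfrac12(3u+2k)^2 .
\]
Split $j=2r+\varepsilon$ with $\varepsilon\in\{0,1\}$. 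The sign then becomes $(-1)^{m+k}=(-1)^{j}=(-1)^\varepsilon$, and $3u+2k=6r+3\varepsilon+1+2k$. For fixed $r,\varepsilon$, as $k$ runs over $\mathbb{Z}$ this runs bijectively over the integers $\equiv\varepsilon+1\pmod 2$, i.e.\ over $2n+\varepsilon+1$ with $n\in\mathbb{Z}$. This yields exactly \eqref{hhat1}, and \eqref{hhat2} then gives the proposition.

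\textbf{Main obstacle.} The main difficulty is bookkeeping rather than conceptual. We must check that the reindexing $(m,k)\mapsto(r,\varepsilon,n)$ is a bijection and that the signs $(-1)^{m+k}$ track correctly through both substitutions. In addition, all series should be absolutely convergent so that splitting $E$ and rearranging are justified. This holds because $\b(x^2)$ decays like $e^{-\pi x^2}$, and the holomorphic piece is supported on the cone $|k|<|n_1|$, where $Q$ is positive definite in effect.
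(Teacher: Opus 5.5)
Your proof is correct and follows essentially the same route as the paper. Both arguments split $\varrho$ via $E(x)=\mathrm{sgn}(x)(1-\beta(x^2))$, match the sign part to $q^{\frac13}H(q)$ using the cone $|k|<|n_1|$ and the substitution $m=-n-1$, and reindex the incomplete-gamma part (your $u=n_1-k$, the paper's $r=n+j$), then split by parity to recover the definition of $H^{-}$. The only differences are minor: you compute from the theta function toward $H$ rather than the reverse, and you justify absolute convergence explicitly, which the paper leaves implicit.
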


\begin{proof}
	
We may rewrite $H(q)$ as follows:  
\begin{equation*}
H(q)=\sum_{\substack{n+j\ge 0\\n-j\ge 0}}(-1)^{n+j}q^{3n^2-j^2+2n}+\sum_{\substack{n+j\ge 0\\n-j\ge 0}}(-1)^{n+j}q^{3n^2-j^2+4n+1}.
\end{equation*}
  Making in the second sum the change of variables $n\mapsto -n-1, j\mapsto -j$, we obtain (noting the sign-change in the second summand) 
\begin{equation*}
H(q)=\left(\sum_{\substack{n+j\ge 0\\n-j\ge 0}}-\sum_{\substack{n+j< 0\\n-j< 0}}\right)(-1)^{n+j}q^{3n^2-j^2+2n}.
\end{equation*}
Let $n_1:=n+\frac 13$ and $n_2:=j$. Using $3n^2+2n=3(n+\frac13)^2-\frac13$ and $(-1)^n=e^{-\frac{\pi i}{3}}e^{\pi in_1}$, we obtain  
\begin{equation*}
H(q)=\frac{e^{-\frac{\pi i}{3}}q^{-\frac 13}}{2}\!\sum_{\bm{n}\in \mathbb{Z}^2+\bm{a}} \left(\text{sgn}\left(n_1+n_2\right)+\text{sgn}\left(n_1-n_2\right)\right)e^{\pi i\left(n_1+n_2\right)}q^{3n^2_1-n^2_2}.
\end{equation*}
Using $B(\bm{c_1},\bm{n})=6(n_1-n_2)$, $B(\bm{c_2},\bm{n})=-6(n_1+n_2)$, and $B(\bm{b},\bm{n})=\frac{n_1+n_2}{2}$, this becomes  
\[
H(q)=\frac{e^{-\frac{\pi i}{3}}q^{-\frac 13}}{2}\!\sum_{\bm{n}\in \mathbb{Z}^2+\bm{a}}\left(\text{sgn}\left(B\left(\bm{c_1},\bm{n}\right)\right)-\text{sgn}\left(B\left(\bm{c_2},\bm{n}\right)\right)\right)e^{2\pi i B\left(\bm{b}, \bm{n}\right)}q^{Q\left(\bm{n}\right)}.
\]
Thus, by \eqref{Ebeta}, \eqref{theta}, and the definitions of $\bm{c_j},\, Q(\bm{n})$, and  $B(\bm{n},\bm{m})$, we have (reverting the above calculations) 
\begin{align*}
H^*(\t)&:=\frac{e^{-\frac{\pi i}{3}}}{2}\vartheta_{\bm{a},\bm{b}}(\t)-q^{\frac 13}H(q)\\
&\hspace{0.1 cm}=-\frac{q^{\frac 13}}2\!\sum_{n,j\in \mathbb{Z}}\Bigg(\textnormal{sgn}\!\left(n\!+\!j\!+\!\frac 13\right)\!\b\left(\!6\left(n\!+\!j\!+\!\frac 13\right)^2v\right)\\
&\hspace{2 cm}+\!\textnormal{sgn}\!\left(n\!-\!j\!+\!\frac 13\right)\!\b\left(6\!\left(n\!-\!j\!+\!\frac 13\right)^2 v\right)\Bigg)(-1)^{n+j}q^{3n^2+2n-j^2}.
\end{align*}

To finish the proof, it remains to show that
\begin{equation}\label{Hsum}
H^*(\t)=H^{-}(\t).
\end{equation}
Now letting $j\mapsto -j$ in the second summand and then setting $r=n+j$, we obtain the following expression
\begin{align}\label{ref6}
H^{*}(\t)
&=-\sum_{n,r\in \mathbb{Z}}\text{sgn}\left(r+\frac 13\right)(-1)^{r}\b\left(6\left(r+\frac 13\right)^2 v\right)\nonumber\\
&\hspace{6 cm}\times q^{2\,\left(n+\frac{r+1}{2}\right)^2-\frac 32\left(r+\frac 13\right)^2}.
\end{align}
Splitting the terms according to the parity, i.e., $r\mapsto 2r+\varepsilon$ ($\varepsilon\in\{0,1\}$) and then $n\mapsto n-r$ in \eqref{ref6}, we obtain 
\begin{align*}
H^*(\t)&=-
\sum_{\varepsilon\in\{0,1\}}(-1)^{\varepsilon}\sum_{n\in \mathbb{Z}}q^{2\,\left(n+\frac{\varepsilon+1}{2}\right)^2}\\
&\hspace{2 cm}\times \sum_{r\in \mathbb{Z}}\text{sgn}\left(2r\!+\!\varepsilon\!+\!\frac 13\right)\b\left(6\!\left(2r\!+\!\varepsilon\!+\!\frac 13\right)^2\! v\right)q^{-\frac 32\left(2r\!+\varepsilon+\!\frac 13\right)^2}\\
&=-\frac{1}{\sqrt{\pi}}\sum_{\varepsilon\in\{0,1\}}(-1)^{\varepsilon}\sum_{n\in \mathbb{Z}}q^{\frac 12\left(2n+\varepsilon+1\right)^2}\\
&\hspace{1 cm}\times\sum_{r\in \mathbb{Z}}\text{sgn}\left(2r+\varepsilon+\frac 13\right) \Gamma\left(\frac 12,6\pi\left(2r+\varepsilon+\frac 13\right)^2v\right)q^{-\frac 32\left(2r+\varepsilon+\frac 13\right)^2},
\end{align*}
using \eqref{Ebeta}. Comparing with the definition of $H^{-}(\t)$ gives \eqref{Hsum}.\qedhere 
\end{proof}

\section{Completing the function $(q^4;q^4)_{\infty}S(q)|S_{2,0}$}\label{sec3}

In this section, we relate the left-hand side of Conjecture \ref{ABConj3} to explicit combinations of mock theta functions. For this, define 
\begin{align*}
A(q)&:=\left(q^2;q^2\right)_{\infty}\left[S(q)\Bigm|S_{2,0}\right]_{q\mapsto q^{\frac 12}}.
\end{align*}
Thus $A(q)$ is a rescaled version of the left-hand side of Conjecture \ref{ABConj3}. Moreover define the non-holomorphic part by
\begin{align*} 
A^{-}(\t)&:=-\frac{1}{\sqrt{\pi}}\sum_{\varepsilon\in\{0,1\}}\!\!(-1)^{\varepsilon}\sum_{\substack{n\in \mathbb{Z}\\r\in \mathbb{Z}}}\text{sgn}\left(2r+\varepsilon+\frac 13\right)\\
&\hspace{2 cm}\times\Gamma\left(\frac 12,6\pi\left(2r+\varepsilon+\frac 13\right)^2v\right)q^{-\frac 32\left(2r+\varepsilon+\frac 13\right)^2+\frac 12\left(2n+\varepsilon+1\right)^2}.
\end{align*}
With this, we then let
\begin{align*}
\widehat{A}(\t)&:=q^{\frac 13}A\!\left(q\right)+A^{-}(\t).
\end{align*}

\begin{proposition}\label{prop2}
We have the identity
\begin{align*}
\widehat{A}(\t)=2\frac{\eta^6(2\t)}{\eta^4(\t)}-\frac 14\left(\Theta\!\left(\frac{\t}{2}\right)\widehat{F}_2(\t)+\Theta\!\left(\frac{\t+1}{2}\right)\widehat{F}_3\left(\t\right)\right).
\end{align*}
\end{proposition}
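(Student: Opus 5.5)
The plan is to start from Theorem \ref{BBBthm}, which writes $S(q)$ through $B(-q)$ and $\omega(-q)$, and to apply the sieve $S_{2,0}$ to each term separately. Under $q\mapsto -q$ the even part of a series is unchanged. Hence $S(q)|S_{2,0}=\frac12(S(q)+S(-q))$, and
\[
S(q)+S(-q)=2B(-q)+2B(q)-\frac{(q;q^2)_\infty^2}{(-q^2;q^2)_\infty}\omega(-q)-\frac{(-q;q^2)_\infty^2}{(-q^2;q^2)_\infty}\omega(q).
\]
After multiplying by $(q^2;q^2)_\infty$ and replacing $q\mapsto q^{\frac12}$, the coefficients of the $\omega$-terms become the eta quotients
\[
\frac{(q^{\frac12};q)^2_\infty (q;q)_\infty}{(-q;q)_\infty}\quad\text{and}\quad\frac{(-q^{\frac12};q)^2_\infty (q;q)_\infty}{(-q;q)_\infty}.
\]
Using \eqref{ONO}, these equal $\Theta(\frac{\t+1}{2})$ and $\Theta(\frac{\t}{2})$, up to powers of $q$. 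Concretely, $(q;q)_\infty/(-q;q)_\infty=\frac{(q;q)_\infty^2}{(q^2;q^2)_\infty}$, while $(q^{\frac12};q)_\infty^2=\frac{(q^{\frac12};q^{\frac12})^2_\infty}{(q;q)^2_\infty}$. Multiplying gives $\frac{(q^{1/2};q^{1/2})_\infty^2}{(q;q)_\infty}$, which is the product form of $\Theta(\frac{\t+1}{2})$ by \eqref{ONO}; the other case is analogous. Since $F_2=2q^{\frac13}\omega(q^{\frac12})$ and $F_3=2q^{\frac13}\omega(-q^{\frac12})$, the $\omega$-part of $q^{\frac13}A(q)$ is exactly
\[
-\frac14\left(\Theta\!\left(\tfrac{\t}{2}\right)F_2(\t)+\Theta\!\left(\tfrac{\t+1}{2}\right)F_3(\t)\right).
\]
I will verify this pairing carefully, since the substitution $q\mapsto -q$ swaps $\omega(q^{\frac12})$ with $\omega(-q^{\frac12})$ and the two theta factors along with it.

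The $B$-part must be handled separately. I need the identity
\[
q^{\frac13}(q^2;q^2)_\infty\bigl(B(q)+B(-q)\bigr)\big|_{q\mapsto q^{\frac12}}=2\frac{\eta^6(2\t)}{\eta^4(\t)},
\]
that is, the even part of $B$ should be a pure eta quotient. My approach is to use a known Appell--Lerch or Hecke-type expansion of McIntosh's $B(q)$ (e.g.\ $B(q)$ expressed via $\mu$-type sums) and check that symmetrizing in $q\mapsto -q$ kills the mock part. Failing that, I would prove the identity by Sturm's theorem (Theorem \ref{Sturm}), after establishing modularity some other way. This is the step I expect to be the main obstacle: I must confirm that the even part of $B$ is genuinely modular, and that its mock part cancels rather than contributing to $A^-$. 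One useful consistency check is that $\eta^6(2\t)/\eta^4(\t)$ has leading exponent $\frac{12-4}{24}=\frac13$, which matches the factor $q^{\frac13}$.

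Finally, I will compare the non-holomorphic parts. Writing $\widehat{F}_j=F_j+F_j^-$, I have to show that
\[
-\frac14\left(\Theta\!\left(\tfrac{\t}{2}\right)F^-_2+\Theta\!\left(\tfrac{\t+1}{2}\right)F^-_3\right)=A^-(\t).
\]
The plan is to express $F_2^-$ and $F_3^-$ through the standard formula, in which the integral \eqref{defF} of the unary theta series $t_2,t_3$ gives sums of $\operatorname{sgn}(m+\frac13)\Gamma(\frac12,3\pi(m+\frac13)^2 v)q^{-\frac32(m+\frac13)^2}$, with a sign $(-1)^m$ present for $t_2$. Adding the two terms with $\Theta(\frac{\t}{2})=\sum q^{n^2/2}$ and $\Theta(\frac{\t+1}{2})=\sum(-1)^nq^{n^2/2}$ then separates the contributions by the parity of $m$ and $n$. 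With $m=2r+\varepsilon$, this should reproduce the $\varepsilon$-sum and the factor $q^{\frac12(2n+\varepsilon+1)^2}$ in $A^-$. The incomplete gamma arguments need care here. The expression $6\pi(\cdot)^2v$ in $A^-$ versus $3\pi(\cdot)^2 v$ suggests that the normalization of $t_j$ or the variable $v$ must be tracked, and I will fix the constants by a direct computation of the integral.
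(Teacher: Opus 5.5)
Your route is the paper's: sieve Theorem~\ref{BBBthm}, identify the $\omega$-contribution with $-\frac14\bigl(\Theta(\frac{\t}{2})F_2+\Theta(\frac{\t+1}{2})F_3\bigr)$, compute $F_2^-$ and $F_3^-$ termwise as incomplete gamma series, and split by parity. Those parts go through, modulo the slips noted below.

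The genuine gap is the $B$-part. The whole statement depends on
\[
\left[B(q)\bigm|S_{2,0}\right]_{q\mapsto q^{\frac12}}=\frac{\left(q^2;q^2\right)^5_{\infty}}{\left(q;q\right)^4_{\infty}},
\]
and you do not supply it. Your fallback via Theorem~\ref{Sturm} is circular. Sturm can only be invoked once you know that $q^{\frac13}(q^2;q^2)_\infty[B(q)|S_{2,0}]_{q\mapsto q^{1/2}}$ is a holomorphic modular form, and that is exactly what is in doubt. A priori, the even part of the mock theta function $B$ could carry a non-holomorphic completion term, which would then have to appear in $A^-$.

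The paper does not prove this identity either; it imports it from the example following Theorem~1.3 of \cite{BOR}. Your first idea is the right mechanism: complete $B$, show that its non-holomorphic part cancels upon symmetrizing under $q\mapsto -q$, then identify the resulting modular form by a finite coefficient check. As written, though, it is only a plan. The proposal is incomplete until you either cite that result or carry out this argument.

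There are also two bookkeeping slips.

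First, the factor $(q^4;q^4)_\infty$ of the conjecture becomes $(q^2;q^2)_\infty$ after $q\mapsto q^{\frac12}$, not $(q;q)_\infty$. With your factor one gets
\[
\frac{(q^{\frac12};q)^2_\infty(q;q)_\infty}{(-q;q)_\infty}=\frac{(q^{\frac12};q^{\frac12})^2_\infty}{(q^2;q^2)_\infty},
\]
which is not $\Theta(\frac{\t+1}{2})$. Your ``multiplying gives'' line contains a compensating error. With the correct factor, $(q^2;q^2)_\infty/(-q;q)_\infty=(q;q)_\infty$, and one obtains exactly $(q^{\frac12};q^{\frac12})^2_\infty/(q;q)_\infty=\Theta(\frac{\t+1}{2})$ by \eqref{ONO}, with no extra power of $q$. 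The same correction applies to your displayed $B$-identity.

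Second, the integral in \eqref{defF} starts at $-\overline{\t}$, so $w+\t$ runs from $2iv$ to $i\infty$. Hence the incomplete gamma argument is $6\pi(n+\frac13)^2v$, matching $A^-$ directly; there is no discrepancy with $3\pi$ to reconcile.
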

\begin{proof} 
We now use Theorem \ref{BBBthm} to rewrite $A$ in terms of mock theta functions. Namely we have 
\[
A\left(q\right)=\left(q^2;q^2\right)_{\infty}\left[\left(2B(-q)-\frac{\left(q;q^2\right)^2_{\infty}}{\left(-q^2;q^2\right)_{\infty}}\o(-q)\right)\Biggm|S_{2,0}\right]_{q\mapsto q^{\frac 12}}.
\]
Now note that for a $q$-series
\[
g(-q)|S_{2,0}=g(q)|S_{2,0},
\]
since only even powers of $q$ survive when applying $S_{2,0}$. Thus, we have  
\begin{align}\label{splitF}
A(q)
&=\left(q^2;q^2\right)_{\infty}\left(2A_1(q)-\frac{A_2(q)}{\left(-q\right)_{\infty}}\right),
\end{align}
where
\[
A_1(q):=\left[B(q)\bigm|S_{2,0}\right]_{q\mapsto q^{\frac 12}},\quad A_2(q):=\left[\left(\left(-q;q^2\right)^2_{\infty}\o(q)\right)\Bigm|S_{2,0}\right]_{q\mapsto q^{\frac 12}}.
\]

Now, by the example below Theorem 1.3 of \cite{BOR}, we have
\begin{align}\label{firstpart}
A_1(q)
&=\frac{\left(q^2;q^2\right)^5_{\infty}}{\left(q\right)^4_{\infty}}.
\end{align}
Moreover, by the first identity in \eqref{ONO}, we may write 
\begin{equation*}
A_2(q)=\frac{1}{\left(q\right)_{\infty}}\left[\left(\Theta(\t)\o(q)\right)\Bigm|S_{2,0}\right]_{q\mapsto q^{\frac 12}}.
\end{equation*}
Plugging this and \eqref{firstpart} into \eqref{splitF}, we have  
\begin{equation}\label{eqn1}
A(q)=2q^{-\frac 13}\frac{\eta^6\left(2\t\right)}{\eta^4(\t)}-\left[\left(\Theta(\t)\o(q)\right)\Bigm|S_{2,0}\right]_{q\mapsto q^{\frac 12}}.
\end{equation}

We next rewrite the second summand in \eqref{eqn1}. For this, let
\[
g(\t):=\Theta(\t)\o(q),\quad G\left(\t\right):=\left[g(\t)\Bigm|S_{2,0}\right]_{q\mapsto q^{\frac 12}}. 
\] 
Plugging this into \eqref{eqn1}, we obtain 
\begin{equation*}
q^{\frac 13}A\!\left(q\right)=2\frac{\eta^6\left(2\t\right)}{\eta^4(\t)}-q^{\frac 13}G(\t).
\end{equation*}
Note that 
\begin{align*}
G\left(\t\right)
&=\frac 12\left(g\left(\frac{\t}{2}\right)+g\left(\frac{\t+1}{2}\right)\right)\nonumber\\
&=\frac 12\left(\Theta\left(\frac{\t}{2}\right)\omega\Bigl(q^{\frac{1}{2}}\Bigr)+\Theta\left(\frac{\t+1}{2}\right)\omega\left(-q^{\frac 12}\right)\right)\nonumber\\
&=\frac{q^{-\frac 13}}{4}\left(\Theta\left(\frac{\t}{2}\right)F_2(\t)+\Theta\left(\frac{\t+1}{2}\right)F_3(\t)\right).
\end{align*}

It remains to show that
\begin{equation}\label{ident}
A^*(\t)=A^{-}(\t),
\end{equation}
where
\begin{equation}\label{Astar}
A^*(\t):=-\frac{1}{4}\left(\Theta\left(\frac{\t}{2}\right)F^{-}_2(\t)+\Theta\left(\frac{\t+1}{2}\right)F^{-}_3(\t)\right).
\end{equation}
For this, we first rewrite $F^{-}_2$ and $F^{-}_3$. By definition of $t_2$ (see below \eqref{defF}), we have
\begin{equation}\label{ref10}
\int_{-\overline{\t}}^{i\infty}\frac{t_2(w)}{\sqrt{-i(w+\t)}}dw=\sum_{n\in \mathbb{Z}}(-1)^n\left(n+\frac 13\right)\int_{-\overline{\t}}^{i\infty}\frac{e^{3\pi i\left(n+\frac 13\right)^2 w}}{\sqrt{-i(w+\t)}}dw.
\end{equation}
Now, making the change of variables $w\mapsto \frac{it}{3\pi(n+\frac 13)^2}-\t$, we compute 
\begin{align*}
\int_{-\overline{\t}}^{i\infty}\frac{e^{3\pi i\left(n+\frac 13\right)^2 w}}{\sqrt{-i(w+\t)}}dw
&=\frac{i}{\sqrt{3\pi}\left|n+\frac 13\right|}q^{-\frac 32\left(n+\frac 13\right)^2}\int_{6\pi\left(n+\frac 13\right)^2v}^{\infty}\sqrt{t}e^{-t}\frac{dt}{t}\nonumber\\
&=\frac{i\Gamma\!\left(\frac 12,6\pi\left(n+\frac 13\right)^2v \right)q^{-\frac 32\left(n+\frac 13\right)^2}}{\sqrt{3\pi}\left|n+\frac 13\right|}.
\end{align*}
 Plugging this into \eqref{ref10} and then using the definition of $F^{-}_2$ in \eqref{defF}, we obtain
\[
F^{-}_2(\t)=\frac{2}{\sqrt{\pi}}\sum_{n\in \mathbb{Z}}(-1)^n\text{sgn}\left(n+\frac 13\right)\Gamma\!\left(\frac 12,6\pi\left(n+\frac 13\right)^2v \right)q^{-\frac 32\left(n+\frac 13\right)^2}.
\]
Analogously, we have  
\begin{align*}
F^{-}_3(\t)&=-\frac{2}{\sqrt{\pi}}\sum_{n\in \mathbb{Z}}\text{sgn}\left(n+\frac 13\right)\Gamma\!\left(\frac 12,6\pi\left(n+\frac 13\right)^2v \right)q^{-\frac 32\left(n+\frac 13\right)^2}.
\end{align*}
Thus, using \eqref{Theta} and then plugging into \eqref{Astar}, we obtain
\begin{align*}
A^*(\t)&=-\frac{1}{2\sqrt{\pi}}\sum_{\substack{n\in \mathbb{Z}\\m\in \mathbb{Z}}}\text{sgn}\left(n+\frac 13\right)\left((-1)^n-(-1)^m\right)\\[-0.75 em]
&\hspace{3 cm}\times \Gamma\!\left(\frac 12,6\pi\left(n+\frac 13\right)^2v \right)q^{\frac{m^2}{2}-\frac 32\left(n+\frac 13\right)^2}.
\end{align*}
 The factor $(-1)^n-(-1)^m$ vanishes unless $n\equiv m+1\pmod{2}$. Splitting the sum according to this parity condition gives exactly the expression defining $A^{-}(\t)$. Hence \eqref{ident} follows.\qedhere 
\end{proof}

Define
\[
\widehat{M}(\t):=\widehat{H}(\t)-\widehat{A}(\t).
\]
By Proposition \ref{prop1} and Proposition \ref{prop2}, $\widehat{M}(\t)$ is actually a $q$-series.


\begin{corollary}\label{cor1}
 We have
\[
\widehat{M}(\t)=q^{\frac 13}\left(H(q)-A(q)\right).
\]
\end{corollary}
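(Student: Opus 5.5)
By definition, $\widehat{M}(\t)=\widehat{H}(\t)-\widehat{A}(\t)$. By \eqref{hhat2} we have $\widehat{H}(\t)=q^{\frac13}H(q)+H^{-}(\t)$, and by the definition preceding Proposition \ref{prop2} we have $\widehat{A}(\t)=q^{\frac13}A(q)+A^{-}(\t)$. Subtracting gives
\[
\widehat{M}(\t)=q^{\frac13}\left(H(q)-A(q)\right)+H^{-}(\t)-A^{-}(\t).
\]
The corollary therefore reduces to the single identity $H^{-}(\t)=A^{-}(\t)$.

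This identity holds by direct comparison of the two defining series. $H^{-}$ is defined in \eqref{hhat1}, and $A^{-}$ is defined at the start of Section \ref{sec3}. Both are
\[
-\frac{1}{\sqrt{\pi}}\sum_{\varepsilon\in\{0,1\}}(-1)^{\varepsilon}\sum_{n,r\in\mathbb{Z}}\textnormal{sgn}\left(2r+\varepsilon+\frac13\right)\Gamma\left(\frac12,6\pi\left(2r+\varepsilon+\frac13\right)^2v\right)q^{-\frac32\left(2r+\varepsilon+\frac13\right)^2+\frac12\left(2n+\varepsilon+1\right)^2},
\]
with the same sign factor, the same incomplete gamma factor and the same exponent. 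The series converges absolutely, since the incomplete gamma factor decays like $e^{-6\pi(2r+\varepsilon+\frac13)^2v}$ and this dominates the growth $e^{3\pi(2r+\varepsilon+\frac13)^2v}$ coming from the negative exponent of $q$. Hence the two expressions agree termwise and $H^{-}=A^{-}$.

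The substantive work has already been done: Propositions \ref{prop1} and \ref{prop2} computed the non-holomorphic parts and brought them to this common form (in the proof of Proposition \ref{prop2}, via the parity condition $n\equiv m+1\pmod 2$). No step here is an obstacle. The only care needed is to check that the normalisations of the two completions match, namely the common prefactor $q^{\frac13}$ and the common substitution $q\mapsto q^{\frac12}$, which is immediate from the definitions. It follows that $\widehat{M}(\t)=q^{\frac13}(H(q)-A(q))$ is a holomorphic $q$-series.
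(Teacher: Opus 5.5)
Your proposal is correct and follows the paper's argument. The corollary is immediate from the definitions: $H^{-}$ and $A^{-}$ are given by literally identical (absolutely convergent) series, so they cancel in $\widehat{M}=\widehat{H}-\widehat{A}$, which is all the paper's one-line justification via Propositions \ref{prop1} and \ref{prop2} amounts to.
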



\section{A modularity result}\label{sec4}

In this section we determine modularity properties of $\widehat{M}$. We start with $\widehat{A}$.

\begin{proposition}\label{lemma2}
The function $\widehat{A}$ satisfies\footnote{We could show that $\widehat{A}$ is a harmonic Maass form of weight $1$ with multiplier. However, we do not require this fact for the paper.}
\[
\widehat{A}(\t+1)=\z_3\widehat{A}(\t),\qquad \widehat{A}\left(\frac{\t}{2\t+1}\right)=\z_{12}(2\t+1)\widehat{A}(\t).
\]	
\end{proposition}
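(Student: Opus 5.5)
Our plan is to use the decomposition of Proposition \ref{prop2},
\[
\widehat{A}(\t)=2\frac{\eta^6(2\t)}{\eta^4(\t)}-\frac 14\left(\Theta\!\left(\frac{\t}{2}\right)\widehat{F}_2(\t)+\Theta\!\left(\frac{\t+1}{2}\right)\widehat{F}_3(\t)\right),
\]
and verify each transformation separately on the eta quotient and on the vector-valued combination. Since $\G_0(2)$ is generated (up to $\pm I$) by $T=\left(\begin{smallmatrix}1&1\\0&1\end{smallmatrix}\right)$ and $\left(\begin{smallmatrix}1&0\\2&1\end{smallmatrix}\right)=ST^{-2}S^{-1}$ up to sign, it suffices to prove the two stated laws. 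We will use $\t\mapsto \frac{\t}{2\t+1}$ in the form $-\frac{1}{\t'}$ with $\t'=-\frac1\t-2$, so that everything reduces to repeated applications of $S$ and $T$ from \eqref{eta}, \eqref{theta0} and Theorem \ref{Zthm}.

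For $T$: the eta quotient $\eta^6(2\t)\eta^{-4}(\t)$ picks up $\z_{24}^{12}\z_{24}^{-4}=\z_3$ by \eqref{eta}. For the second part, $\t\mapsto\t+1$ sends $\Theta(\frac\t2)\mapsto\Theta(\frac{\t+1}{2})$ and $\Theta(\frac{\t+1}{2})\mapsto\Theta(\frac\t2+1)=\Theta(\frac\t2)$, while by Theorem \ref{Zthm} $\widehat F_2\mapsto \z_3\widehat F_3$ and $\widehat F_3\mapsto\z_3\widehat F_2$. Hence the bracket is multiplied by $\z_3$, which gives the first identity.

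For the second identity, first handle the eta quotient: write $\eta(2\t)$ and $\eta(\t)$ at $\frac{\t}{2\t+1}$ as transforms under $\G_0(2)$ elements, using the inversion in \eqref{eta} twice (for $\eta(\t)$ via $\frac{\t}{2\t+1}=\frac{-1}{-2-1/\t}$, and for $\eta(2\t)$ via $\frac{2\t}{2\t+1}=1-\frac{1}{2\t+1}$). This produces weight $1$ factor $(2\t+1)$ and a root of unity, which should come out as $\z_{12}$. For the mock part, the key point is that the pair $(\Theta(\frac\t2),\Theta(\frac{\t+1}{2}))$ and $(\widehat F_2,\widehat F_3)$ must be transformed together. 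We apply $S$: by Theorem \ref{Zthm}, $\widehat F_2(-\frac1\t)=\sqrt{-i\t}\,\widehat F_1(\t)$ and $\widehat F_3(-\frac1\t)=-\sqrt{-i\t}\,\widehat F_3(\t)$; then $T^{-2}$ acts diagonally (with $\widehat F_2,\widehat F_3$ swapped twice, giving $\z_3^2$ on each, and $\z_{24}^{-2}$ on $\widehat F_1$); then $S$ again returns $\widehat F_1\mapsto \widehat F_2$, $\widehat F_3\mapsto-\widehat F_3$. Thus $\widehat F_2(\frac{\t}{2\t+1})$ and $\widehat F_3(\frac{\t}{2\t+1})$ are explicit multiples of $\sqrt{2\t+1}$-type factors times $\widehat F_2(\t)$, $\widehat F_3(\t)$ respectively. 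In parallel, $\Theta(\frac{\t}{2(2\t+1)})$ and $\Theta(\frac{3\t+1}{2(2\t+1)})$ are computed via \eqref{theta0} and \eqref{ONO} (expressing $\Theta(\frac{\t+1}2)=\eta^2(\frac\t2)/\eta(\t)$ and $\Theta(\frac\t2)$ as an eta quotient in $\frac\t4,\frac\t2,\t$) and \eqref{eta}.

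The main obstacle is the bookkeeping of roots of unity and branches of square roots: one must check that $\Theta(\frac\t2)\widehat F_2$ and $\Theta(\frac{\t+1}2)\widehat F_3$ each transform to themselves (not to each other) with the same constant $\z_{12}(2\t+1)$, matching the eta quotient's constant. We would do this by writing each $\Theta$ as an eta quotient and tracking the multipliers through the eta multiplier system, tracking $\sqrt{-i\t}$ branches carefully (using $\sqrt{-i\t}\sqrt{-i(-1/\t-2)}$ combinations equal to $\pm\sqrt{\,\cdot\,}$ with $\operatorname{Re}>0$), and finally confirming the constant by a check at a convenient point such as $\t=i/\sqrt2$-type fixed points if sign ambiguities remain.
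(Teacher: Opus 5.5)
Your plan is essentially the paper's proof. You decompose $\widehat{A}$ via Proposition~\ref{prop2}, check the eta quotient directly from \eqref{eta}, and move $\widehat{\bm F}$ along $\tau\mapsto-\frac1\tau\mapsto-\frac1\tau-2\mapsto\frac{\tau}{2\tau+1}$ using Theorem~\ref{Zthm}.

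The one real difference is how you treat the theta factors. The paper adjoins $T_3(\tau)=2\sum_{n\ge0}q^{(2n+1)^2/8}$, so that $\bm T$ is closed under $S$ and $T$, and then runs the same $S^{-1}T^{-2}S$ computation as for $\widehat{\bm F}$. You instead use eta-quotient expressions and the eta multiplier. That works and gives $\Theta(\frac\tau2)\mapsto\sqrt{2\tau+1}\,\Theta(\frac\tau2)$ and $\Theta(\frac{\tau+1}2)\mapsto -i\sqrt{2\tau+1}\,\Theta(\frac{\tau+1}2)$, in agreement with \eqref{conc}. It avoids $T_3$ at the cost of more eta bookkeeping. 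Note that the correct quotient is $\Theta(\frac\tau2)=\eta^5(\tau)/\bigl(\eta^2(\frac\tau2)\eta^2(2\tau)\bigr)$, with arguments $\frac\tau2,\tau,2\tau$ rather than $\frac\tau4,\frac\tau2,\tau$.

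There is one slip to fix. The middle step is $\tau\mapsto\tau-2$, so it contributes $\widehat{\bm F}(\tau-2)=\operatorname{diag}(\zeta_{12},\zeta_3,\zeta_3)\,\widehat{\bm F}(\tau)$. The factors $\zeta_{24}^{-2}$ and $\zeta_3^{2}$ you wrote belong to $\tau\mapsto\tau+2$.
\begin{itemize}
\item With your values you would get $\widehat F_2\mapsto\zeta_{12}^{-1}\sqrt{2\tau+1}\,\widehat F_2$ and $\widehat F_3\mapsto\zeta_3^{2}\sqrt{2\tau+1}\,\widehat F_3$, and the two products would not both pick up $\zeta_{12}$.
\item With the correct values you get $\widehat F_2\mapsto\zeta_{12}\sqrt{2\tau+1}\,\widehat F_2$ and $\widehat F_3\mapsto\zeta_3\sqrt{2\tau+1}\,\widehat F_3$. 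Then $1\cdot\zeta_{12}=\zeta_{12}$ and $(-i)\,\zeta_3=\zeta_{12}$, as required.
\end{itemize}

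No numerical check is needed for the branches. With $\tau'=-\frac1\tau-2$ one has $(-i\tau')(-i\tau)=2\tau+1$. Both $\sqrt{-i\tau'}\sqrt{-i\tau}$ and $\sqrt{2\tau+1}$ have positive real part, so they are equal. Also, $\frac{i}{\sqrt2}$ is not a fixed point of $\tau\mapsto\frac{\tau}{2\tau+1}$; that map is parabolic and fixes only the cusp $0$.
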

\begin{proof}
Using Proposition \ref{prop2}, we write
\begin{equation*}
\widehat{A}(\t):=2\mathcal{E}(\t)-\frac{\widehat{G}(\t)}{4},
\end{equation*}
where
\begin{equation}\label{mathcalE}
\mathcal{E}(\t):=\frac{\eta^6\left(2\t\right)}{\eta^4(\t)},\qquad \widehat{G}(\t):=\sum_{j\in \{0,1\}}\Theta\left(\frac{\t+j}{2}\right)\widehat{F}_{2+j}\left(\t\right).
\end{equation}
By a direct calculation using \eqref{eta}, we obtain that $\mathcal{E}$ satisfies the required transformations stated in the proposition.

Thus it remains to show that $\widehat{G}$ satisfies the same transformation properties. For this, we let
\begin{align*}
T_1(\t):=\Theta\left(\frac{\t}{2}\right),\quad T_2(\t):=\Theta\left(\frac{\t}{2}+\frac 12\right),\quad T_3(\t):=
2\sum_{n\ge 0}q^{\frac{(2n+1)^2}8}.
\end{align*}
Set
\[
\bm{T}(\t):=\left(T_1(\t), T_2(\t), T_3(\t)\right)^T.
\]
By \eqref{Theta}, \eqref{theta0}, \eqref{ONO}, and \eqref{eta}, we obtain
\begin{equation}\label{Ttrans}
\bm{T}(\t+1)=\begin{pmatrix}
0&1&0\\
1&0&0\\
0&0&\z_8 
\end{pmatrix}\bm{T}(\t), \quad \bm{T}\left(-\frac{1}{\t}\right)=\sqrt{-i\t}\begin{pmatrix}
1&0&0\\
0&0&1\\
0&1&0 
\end{pmatrix}\bm{T}(\t).
\end{equation}
Note that with $S:=(\begin{smallmatrix}
0&-1\\1&0
\end{smallmatrix})$, we have 
\[
S^{-1}T^{-2}S=\begin{pmatrix}
1&0\\
2&1
\end{pmatrix}.
\] Thus, by \eqref{Ttrans}, we may conclude that 
\begin{equation}\label{conc}
\bm{T}\left(\frac{\t}{2\t+1}\right)=\sqrt{2\t+1}\begin{pmatrix}
1&0&0\\
0&\z^{3}_4&0\\
0&0&1
\end{pmatrix}\bm{T}\left(\t\right).
\end{equation}
Similarly, using Theorem \ref{Zthm}, one obtains 
\[
\widehat{\bm{F}}\left(\frac{\t}{2\t+1}\right)=\sqrt{2\t+1}\begin{pmatrix}
\z_{3}&0 &0\\
0 &\z_{12} & 0\\
0&0&\z_3
\end{pmatrix}\widehat{\bm{F}}(\t).
\]
Combining this with \eqref{conc}, each summand in $\widehat{G}$ transforms with the same factor as in Proposition \ref{lemma2} which proves the claim.\qedhere 
\end{proof}




Next we show modular transformations for $\widehat{H}$. 

\begin{proposition}\label{hathtrans}
We have\footnote{Again we could prove that $\widehat{H}$ is a harmonic Maass form of weight $1$ with multiplier.}
\[
\widehat{H}\left(\t+1\right)=\z_3\widehat{H}(\t),\qquad \widehat{H}\left(\frac{\t}{2\t+1}\right)=\z_{12}(2\t+1)\widehat{H}(\t).
\]
\end{proposition}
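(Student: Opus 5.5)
The plan is to work entirely on the indefinite theta side. By Proposition \ref{prop1} it suffices to prove both transformation laws for $\vartheta_{\bm{a},\bm{b}}$ with $\bm{a}=(\frac13,0)^T$, $\bm{b}=(\frac1{12},-\frac14)^T$, $\bm{c_1}=(1,3)^T$, $\bm{c_2}=(-1,3)^T$, and $Q(\bm{n})=3n_1^2-n_2^2$. The relevant data are $A=\mathrm{diag}(6,-2)$, $B(\bm{x},\bm{y})=6x_1y_1-2x_2y_2$, $A^{-1}\mathbb{Z}^2=\frac16\mathbb{Z}\times\frac12\mathbb{Z}$, $A^*=(6,-2)^T$ and $\frac12A^{-1}A^*=(\frac12,\frac12)^T$. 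Since $\bm{c_1},\bm{c_2}\in C_Q$, we have $R(\bm{c_j})=\mathbb{R}^2$, so every characteristic is admissible and Theorem \ref{Zwmainthm1} (5) applies without restriction.

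\emph{Translation.} I would apply Theorem \ref{Zwmainthm1} (4). Here $Q(\bm{a})=\frac13$ and $B(A^{-1}A^*,\bm{a})=2$, so the prefactor is $e^{-2\pi i/3-2\pi i}=\z_3^{-1}$, and the new characteristic is $\bm{b}+\bm{a}+(\frac12,\frac12)^T=(\frac{11}{12},\frac14)^T$. The difference from $\bm{b}$ is $\bm{\mu}=(\frac56,\frac12)^T\in A^{-1}\mathbb{Z}^2$, so Theorem \ref{Zwmainthm1} (2) contributes the factor $e^{2\pi iB(\bm{a},\bm{\mu})}=e^{2\pi i\cdot\frac53}=\z_3^{2}$. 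Together these give $\vartheta_{\bm{a},\bm{b}}(\t+1)=\z_3\vartheta_{\bm{a},\bm{b}}(\t)$. The constant $\frac{e^{-\pi i/3}}2$ in Proposition \ref{prop1} is unaffected, so this yields the first claim. I would double-check the signs against the direct $q$-expansion: $q^{\frac13}H(q)$ visibly picks up $\z_3$ under $\t\mapsto\t+1$, and $H^-$ has exponents in $\frac13+\mathbb{Z}$ together with $v$-dependence only.

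\emph{The map $\t\mapsto\frac{\t}{2\t+1}$.} Write $\frac{\t}{2\t+1}=S^{-1}T^{-2}S\t$, as in the proof of Proposition \ref{lemma2}. First I apply Theorem \ref{Zwmainthm1} (5) to evaluate at $-\frac1\t$. With $\det A=-12$ and $\ell=2$, this expresses $\vartheta_{\bm{a},\bm{b}}(-\frac1\t)$ as $\frac{i}{\sqrt{12}}(-i\t)e^{2\pi iB(\bm{a},\bm{b})}$ times the sum of $\vartheta_{\bm{b}+\bm{\nu},-\bm{a}}$ over the $12$ classes $\bm{\nu}\in A^{-1}\mathbb{Z}^2/\mathbb{Z}^2$. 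Next I apply (4) twice, in inverse form, to each summand to realize $\t\mapsto\t-2$. Finally I apply (5) once more and collect the automorphy factors, which combine to $(2\t+1)$ up to a root of unity.

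\emph{Main obstacle.} This produces a double sum of $144$ terms, and the hard step is collapsing it back to a single multiple of $\vartheta_{\bm{a},\bm{b}}$. I would reduce every characteristic modulo $\mathbb{Z}^2$ using (1), move the $\bm{b}$-shifts in $A^{-1}\mathbb{Z}^2$ into phases using (2), and pair $\pm$ characteristics using (3). The sum over $\bm{\nu}$ should then become a finite Gauss-type character sum that vanishes unless the first characteristic is congruent to $\bm{a}$. I expect the surviving phase to be $\z_{12}$.

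\emph{Fallback.} If the bookkeeping becomes unwieldy, I would instead conjugate by an element of $O_A^+(\mathbb{Z})$. One candidate is $C=-I$ composed with the swap $\bm{c_1}\leftrightarrow\bm{c_2}$, realized by $(n_1,n_2)\mapsto(n_1,-n_2)$. Using Theorem \ref{Zwmainthm2}, this would identify intermediate theta functions with $\vartheta_{\bm{a},\bm{b}}$, keeping in mind that swapping $\bm{c_1}$ and $\bm{c_2}$ introduces a sign. In either approach, the main work is this explicit orthogonality computation.
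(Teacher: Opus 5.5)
Your treatment of $\tau\mapsto\tau+1$ is correct and is exactly the paper's argument. Theorem \ref{Zwmainthm1} (4) gives the factor $\zeta_3^{-1}$ and the characteristic $\bm b+(\frac56,\frac12)^T$, and (2) then contributes $\zeta_3^{2}$.

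\textbf{The gap is in the second transformation.} There you only outline a computation and then say you ``expect the surviving phase to be $\zeta_{12}$'', which is exactly what has to be proved. The mechanism you anticipate, a character sum vanishing unless the first characteristic is $\equiv\bm a$, is also not what happens. Carrying out your double inversion with $w=-2-\frac1\tau$ gives
\[
\vartheta_{\bm a,\bm b}\left(\frac{\tau}{2\tau+1}\right)=\frac{\zeta_{12}(2\tau+1)}{12}\sum_{\bm\nu'}\Sigma(\bm\nu')\,\vartheta_{\bm a-\bm\nu',\bm b}(\tau),\qquad \Sigma(\bm\nu'):=\sum_{\bm\nu}e^{-4\pi iQ(\bm\nu)-2\pi iB(2\bm b+\bm\nu',\bm\nu)},
\]
with $\bm\nu,\bm\nu'$ running over $A^{-1}\mathbb{Z}^2/\mathbb{Z}^2$, and with $\zeta_{12}=e^{-4\pi iQ(\bm b)}$.

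This Gauss sum is nonzero for three classes, not one:
\[
\Sigma(\bm 0)=\Sigma\left(\left(\tfrac23,0\right)^T\right)=8+4\zeta_3^{-1},\qquad \Sigma\left(\left(\tfrac13,0\right)^T\right)=4\sqrt3\,i.
\]
So the characteristics $\bm 0$ and $-\bm a$ survive. You must additionally show $\vartheta_{\bm 0,\bm b}=0$ and $\vartheta_{-\bm a,\bm b}=-\zeta_3^{-1}\vartheta_{\bm a,\bm b}$. Both follow from (2) and (3), because $2\bm b\in A^{-1}\mathbb{Z}^2$. Only then does the coefficient collapse to $(8+4\zeta_3^{-1})(1-\zeta_3^{-1})=12$, giving $\zeta_{12}(2\tau+1)$. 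So your route can be completed, but as written its decisive step is missing and its predicted outcome is inaccurate.

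The paper avoids the second inversion entirely. After one application of (5), apply (4) twice and then (2) to each summand. This returns the same characteristic, since $-2\bm\rho-A^{-1}A^*\in A^{-1}\mathbb{Z}^2$. Hence $\tau\mapsto\tau-2$ acts diagonally:
\[
\vartheta_{\bm\rho,-\bm a}(\tau-2)=e^{-4\pi iQ(\bm\rho)}\vartheta_{\bm\rho,-\bm a}(\tau).
\]
For $\bm\rho=\bm{\rho_{\ell_1,\ell_2}}$ this eigenvalue is $\zeta_{12}$ except when $\ell_1\in\{1,4\}$, where it is $-i$. Those terms cancel in pairs via the reflection $\mathrm{diag}(-1,1)$ and Theorem \ref{Zwmainthm2}; this reflection is precisely the composite element in your fallback. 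Thus $\tau^{-1}\vartheta_{\bm a,\bm b}(-\frac1\tau)$ is a $\zeta_{12}$-eigenfunction of $\tau\mapsto\tau-2$, and conjugating by $S$ gives the claim with no Gauss sum at all.
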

\begin{proof}
By Theorem \ref{Zwmainthm1} (4), we have 
\[
\vartheta_{\bm{a},\bm{b}}(\t+1)=\z^{2}_3\vartheta_{\bm{a},\bm{b}+\left(\frac{5}{6},\frac 12\right)}(\t).
\]
Using Theorem \ref{Zwmainthm1} (2) and Proposition \ref{prop1} gives the first claim. Note that the shift is allowed since $(\frac 56,\frac 12)$ lies in $A^{-1}\mathbb{Z}^2$.


We next note that, by Theorem \ref{Zwmainthm1} (5)
\begin{align}\label{star}
\vartheta_{\bm{a},\bm{b}}\left(-\frac{1}{\t}\right)
&=\frac{\z_6\t}{2\sqrt{3}}\sum_{\nu_1,\nu_2}\vartheta_{\bm{b}+\bm{\nu},-\bm{a}}(\t),
\end{align}
where $\nu_1$ runs through a set of representatives of $(\frac 16\mathbb{Z})/\mathbb{Z}$ and $\nu_2$ runs through a set of representatives of $(\frac 12\mathbb{Z})/\mathbb{Z}$. We substitute $\nu_1=\frac{\ell_1}{6}$ and $\nu_2=\frac{\ell_2}{2}$ with $\ell_1$ running modulo $6$ and $\ell_2$ running modulo $2$. We then define
\begin{align}\label{l1l2}
\bm{\rho_{\ell_1,\ell_2}}&:=\bm{b}+\left(\frac{\ell_1}{6},\frac{\ell_2}{2}\right).
\end{align}
Now let $C:=(\begin{smallmatrix}
-1&0\\
\ 0&1
\end{smallmatrix})$. Note that  $C\in O^{+}_{A}(\mathbb{Z})$ as $C^TAC=A$ and $C\bm{c_1}=\bm{c_2}$ which lies in the same component as $\bm{c_1}$. Thus, by Theorem \ref{Zwmainthm2}, we have
\begin{equation*}
\vartheta^{\bm{c_1}, \bm{c_2}}_{\bm{\a},\bm{\b}}=\vartheta^{C\bm{c_1}, C\bm{c_2}}_{C\bm{\a},C\bm{\b}}.
\end{equation*}
Noting that $C\bm{c_1}=\bm{c_2}$, $C\bm{c_2}=\bm{c_1}$, we obtain, as interchanging $\bm{c_1}$ and $\bm{c_2}$ changes the sign of the theta function, 
\[
\vartheta_{\bm{\a},\bm{\b}}=-\vartheta_{C\bm{\a},C\bm{\b}}.
\]
Consequently, we obtain
\begin{equation}\label{plug1}
\vartheta_{\bm{\rho_{\ell_1,\ell_2}},-\bm{a}}=-\vartheta_{C\bm{\rho_{\ell_1,\ell_2}},C(-\bm{a})}=-\vartheta_{C\bm{\rho_{\ell_1,\ell_2}},\bm{a}},
\end{equation}
 using the fact that $C(-\bm{a})=\bm{a}$ in the final step. Now, by \eqref{l1l2}, we have 
\[
C\bm{\rho_{\ell_1,\ell_2}}\equiv \bm{\rho_{5-\ell_1,\ell_2}}\pmod{\mathbb{Z}^2}.
\]
Plugging this into \eqref{plug1} and using Theorem \ref{Zwmainthm1} (2) gives 
\[
\vartheta_{\bm{\rho_{\ell_1,\ell_2}},-\bm{a}}
=-\z^{\ell_1+2}_3\vartheta_{\bm{\rho_{5-\ell_1,\ell_2}},-\bm{a}}.
\]
 Inserting this into \eqref{star} yields 
\begin{align}\label{ref3}
\vartheta_{\bm{a},\bm{b}}\left(-\frac{1}{\t}\right)&=\frac{\z_6\t}{2\sqrt{3}}\sum_{\substack{\ell_1\in\{0,1,2\}\\\ell_2\in\{0,1\}}}\left(1-\z^{2\ell_1+1}_3\right)\vartheta_{\bm{\rho_{\ell_1,\ell_2}},-\bm{a}}(\t)\nonumber\\
&=\frac{\z_6\t}{2\sqrt{3}}\sum_{\substack{\ell_1\in\{0,2\}\\\ell_2\in\{0,1\}}}\left(1-\z^{2\ell_1+1}_3\right)\vartheta_{\bm{\rho_{\ell_1,\ell_2}},-\bm{a}}(\t),
\end{align}
as the term with $\ell_1=1$ vanishes.

 Next, by Theorem \ref{Zwmainthm1} (4), we obtain 
\begin{align*}
&\vartheta_{\bm{\rho_{\ell_1,\ell_2}},-\bm{a}}(\t-2)\nonumber\\
&\hspace{1.5cm}=e^{4\pi i Q\left(\bm{\rho_{\ell_1,\ell_2}}\right)+2\pi i B\left(A^{-1}A^*,\bm{\rho_{\ell_1,\ell_2}}\right)}\vartheta_{\bm{\rho_{\ell_1,\ell_2}},-\bm{a}-2\bm{\rho_{\ell_1,\ell_2}}-A^{-1}A^*}(\t).
\end{align*}
Using this, \eqref{l1l2}, and Theorem \ref{Zwmainthm1} (2), we have (as the relevant shift is $-2\bm{\rho_{\ell_1,\ell_2}}-A^{-1}A^{*}$ which lies in $A^{-1}\mathbb{Z}^2$)
\begin{equation}\label{ref4}
\vartheta_{\bm{\rho_{\ell_1,\ell_2}},-\bm{a}}(\t-2)=e^{-4\pi i Q\left(\bm{\rho_{\ell_1,\ell_2}}\right)}\vartheta_{\bm{\rho_{\ell_1,\ell_2}},-\bm{a}}(\t).
\end{equation}
Now, by \eqref{l1l2}, we have 
\begin{align*}
Q\left(\bm{\rho_{\ell_1,\ell_2}}\right)
=\begin{cases}
-\frac{1}{24}\ \ &\text{if}\ \ \ell_1=0, \ell_2\in\{0,1\},\\
\frac{11}{24}\ \ &\text{if}\ \ \ell_1=2, \ell_2\in\{0,1\}.
\end{cases}
\end{align*}
Plugging this into \eqref{ref4} gives 
\begin{equation*}
\vartheta_{\bm{\rho_{\ell_1,\ell_2}},-\bm{a}}(\t-2)=\z_{12}\vartheta_{\bm{\rho_{\ell_1,\ell_2}},-\bm{a}}(\t).
\end{equation*}
 Applying \eqref{ref3} with $\t$ replaced by $-\frac{1}{\t}$ and then using Proposition \ref{prop1} gives the second transformation.\qedhere 
\end{proof}

\section{Proof of Theorem \ref{BBthm}}\label{sec5}

We are now ready to finish the proof of Theorem \ref{BBthm}.

\begin{proof}[Proof of Theorem \ref{BBthm}]
First, by Proposition \ref{lemma2} and Proposition \ref{hathtrans}, $\widehat{M}$ satisfies
\[
\widehat{M}\left(\t+1\right)=\z_3\widehat{M}(\t),\qquad \widehat{M}\left(\frac{\t}{2\t+1}\right)=\z_{12}(2\t+1)\widehat{M}(\t).
\]
Note that $T$ and $(\begin{smallmatrix}
1&0\\
2&1
\end{smallmatrix})$ generate $\G_0(2)$ (see \cite[Proposition 2]{Chuman}). Moreover, by Corollary \ref{cor1}, $\widehat{M}(\t)$ is a $q$-series. 

We next show that $\widehat{M}(\t)$ has at most polynomial growth at the cusps $i\i$ and $0$ (these are the $2$ inequivalent cusps of $\G_0(2)$). This follows once we show this for $\widehat{H}(\t)$ and $\widehat{A}(\t)$. For $\widehat{H}(\t)$, the required growth follows from its realization as an indefinite theta function in Proposition \ref{prop1}. 

Next, we look at $\widehat{A}(\t)$. As $\t\to i\i$, using \eqref{mathcalE} and \eqref{defF}, we have
\[
\mathcal{E}(\t)=O(1),\quad T_j(\t)=O(1),\qquad \widehat{F}_j(\t)=O(1).
\]
Hence each term in the expression for $\widehat{A}(\t)$ in Proposition \ref{prop2} has at most polynomial growth at $i\infty$. This directly gives the claim for the cusp $i\i$. 

Using \eqref{mathcalE}, \eqref{eta}, Theorem \ref{Zthm}, and \eqref{Ttrans}, one obtains, as $\t\to 0$, 
\[
\widehat{A}\!\left(\t\right)=O(1)\]
and this concludes the claim for the cusp $0$. So $\widehat{M}^{12}$ is a holomorphic modular form of weight $12$ on $\G_0(2)$. By Theorem \ref{Sturm} (with $k=12$ and $N=2$) and Proposition \ref{Onoprop}, we need to show that $4$
Fourier coefficients of $\widehat{M}^{12}(\t)$ vanish. A direct computation shows that these coefficients vanish. Thus $\widehat{M}(\t)=0$. By Corollary \ref{cor1}, we conclude the proof.\qedhere 
\end{proof}


\section{Open questions}\label{sec6}

We conclude by stating several directions for future research:
\begin{enumerate}
	\item All of our functions are components of vector-valued harmonic Maass forms. Do the holomorphic parts of the other components have a combinatorial interpretation?
	\item In Conjecture \ref{ABConj3} even residue classes are studied. It might be worthwhile to study odd residue classes.
	\item Can some of the results be extended to other mock theta function to obtain objects with combinatorial meaning. 
	\item Is there a direct proof of Conjecture \ref{ABConj3}, e.g. by using Bailey pairs?
\end{enumerate}


\end{document}